\newcommand{\aaa}{{\mathcal A}}
\newcommand{\vvv}{{\mathcal V}}
\newcommand{\T}{{\bf T}}
\newcommand{\TM}{\mathbb T}
\newcommand{\R}{{\mathbb R}}
\newcommand{\N}{{\mathbb N}}
\newcommand{\Z}{{\mathbb Z}}
\newcommand{\C}{{\mathbb C}}
\newcommand{\sP}{strongly pattern equivariant}
\newcommand{\wP}{weakly pattern equivariant}
\newtheorem{thm}{Theorem}[section]
\newtheorem*{thm*}{Theorem}
\newtheorem{cor}[thm]{Corollary}
\newtheorem{lem}[thm]{Lemma}
\newtheorem{prop}[thm]{Proposition}
\theoremstyle{definition}
\begin{document}
\title{Meyer sets, topological eigenvalues, and Cantor fiber bundles}
\author{Johannes Kellendonk and  Lorenzo Sadun}
\date{\today}

\address{Johannes Kellendonk\\ Universit\'e de Lyon,
Universit\'e Claude Bernard Lyon 1\\
Institute Camille Jordan, CNRS UMR 5208\\  69622 Villeurbanne, France}\email{kellendonk@math.univ-lyon1.fr}
\address{Lorenzo Sadun\\Department of Mathematics\\The University of
 Texas at Austin\\ Austin, TX 78712 USA} 
\email{sadun@math.utexas.edu}
\thanks{
The work of the first author is partially supported by the ANR SubTile
NT09 564112. 
The work of the second author is partially supported by NSF
  grant  DMS-1101326} 
\date{June 7, 2013}
\keywords{Tilings, Dynamical Systems} 
\subjclass{37B50, 52C22}

\begin{abstract}
  We introduce two new characterizations of Meyer sets.  A repetitive
  Delone set in $\R^d$ with finite local complexity is topologically
  conjugate to a Meyer set if and only if it has $d$ linearly
  independent topological eigenvalues, which is if and only if it is
  topologically conjugate to a bundle over a $d$-torus with totally
  disconnected compact fiber and expansive canonical
  action. ``Conjugate to'' is a non-trivial condition, as we show that
  there exist sets that are topologically conjugate to Meyer sets but
  are not themselves Meyer.  We also exhibit a diffractive set that is
  not Meyer, answering in the negative a question posed by Lagarias,
  and exhibit a Meyer set for which the measurable and topological
  eigenvalues are different.
\end{abstract}

\maketitle

\setlength{\baselineskip}{.6cm}


\section{Introduction}
To provide a rigorous mathematical explanation of the observation that
certain non periodic media (quasicrystals) show sharp Bragg peaks in
their X-ray diffraction, mathematicians came up with the notion of a
{\em pure point diffractive} set. This is a point set $\Lambda$ of
$\R^d$ to which can be associated an auto-correlation measure $\gamma$
whose Fourier transform $\hat\gamma$ is a pure point measure \cite{Hof} 
(see also \cite{Moody08} and references therein). In the
model of the material by the point set, the Bragg peaks correspond
exactly to the points in the dual space ${\R^d}^*$ that have strictly positive
$\hat\gamma$-measure.  This raises the question of which point sets are
pure point diffractive. Whereas an answer in terms of the properties
of the autocorrelation measure $\gamma$ has been found, namely that
this is the case whenever $\gamma$ is strongly almost periodic
\cite{BM,Gouere,GA} no complete geometric characterization of 
such point sets is known.
 
The question 
can be reformulated as a property of the measurable dynamical system
$(\Omega,\R^d,\mu)$. Here $\Omega$ is the hull of $\Lambda$, a compact
metrizable space consisting of points sets whose patches look like
those of $\Lambda$ and on which $\R^d$ acts by translation, and $\mu$
is an invariant ergodic Borel probability measure that is determined
by the patch frequencies. A point $\beta\in{\R^d}^*$ is called an eigenvalue  (or dynamical eigenvalue) for $\Lambda$ if there exists an $L^2$-function $f$ (an eigenfunction)
that satisfies the eigenvalue equation
\begin{equation}\label{eq-EV}
f(\omega - t ) = e^{2\pi i\beta(t)} f(\omega)
\end{equation}
for all $t\in\R^d$ and $\mu$-almost all $\omega\in\Omega$.\footnote{
The dual space ${\R^d}^*$ is often identified with $\R^d$ using a scalar product. Then $\beta(t)$ 
is the scalar product of $\beta$ with $t$.} 
Arguments based on work of Dworkin \cite{Dworkin,LMS,BL}  
showed that any Bragg peak gives rise to a dynamical eigenvalue. Moreover,
$\Lambda$ is pure point diffractive whenever the measurable dynamical system $(\Omega,\R^d,\mu)$ has pure point dynamical spectrum, that is, whenever
$L^2(\Omega,\mu)$ is spanned by eigenfunctions. 

Interestingly enough, there are classes of point sets for which pure
point diffractivity becomes a property of {\em topological} dynamical
systems. An eigenvalue $\beta$ is called a continuous or {\em
  topological} eigenvalue if (\ref{eq-EV}) has a {\em continuous}
solution.  When the point set $\Lambda$ comes from a 
substitution it is known that that all eigenvalues are
topological \cite{Host, Solomyak}.
Furthermore, repetitive regular model sets, which are known to be
pure point diffractive, also have only topological eigenvalues  
\cite{Hof,Schlottmann}. Knowing that an eigenvalue is topological is of advantage in case that the measure $\mu$ is uniquely ergodic, as it allows us to compute the intensity of the associated Bragg peak by means of a Bombieri-Taylor formula \cite{Lenz}.

However, there do exist diffractive 
point sets whose measurable eigenvalues are not all topological 
(see Section \ref{Counter-ex}). It then makes sense to ask about the 
topological eigenvalues. For which point sets are all the eigenvalues 
continuous? 

An experimental material scientist may doubt the usefulness of the
mathematical concept of pure point diffraction for $X$-ray analysis,
since experimental devices only have finite resolution. Since we can only
see Bragg peaks above a given brightness $s$, we can never observe a dense set
of peaks. At best, we can observe a {\em relatively} dense set of peaks,
meaning there is a radius $r$ such that every ball of radius $r$ in $\R^{d*}$
contains at least one peak. 

Recently, Nicolae Strungaru \cite{St1,St} showed that Meyer sets have
this property for any $s$ below the maximal intensity of the Bragg peaks. In
other words, if you can see at least one Bragg peak from a Meyer set
$\Lambda$, and if you increase the sensitivity of your equipment even
slightly, then you will see a relatively dense set of Bragg peaks.  
From this point of view all Meyer sets should be regarded as being 
diffractive, but not in the sense of having {\em pure} point diffraction
spectrum. Instead, a point set with relatively dense set of Bragg peaks
is called {\em essentially diffractive} \cite{St}.

Dworkin's argument now yields a necessary criterion of essential
diffractivity: $\Lambda$ must have $d$ linearly
independent (measurable) eigenvalues. We then ask: is this criterion a
topological property for certain classes of point sets? In other
words, which point sets have $d$ linearly independent {\em
  topological} eigenvalues?  One of our main results gives a
characterization of such sets:
\begin{thm}\label{main}
  A repetitive Delone set with finite local complexity
has $d$ linearly independent topological
  eigenvalues if and only if it is topologically conjugate to a Meyer
  set. In particular, each repetitive Meyer set has $d$ linearly independent
  topological eigenvalues.
\end{thm}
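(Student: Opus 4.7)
My plan is to prove the two directions separately; the ``in particular'' statement then follows from the forward direction applied to the identity conjugacy. The existence of $d$ linearly independent topological eigenvalues is clearly invariant under topological conjugacy (continuous eigenfunctions pull back along a conjugacy with their eigenvalues intact), so the forward direction reduces to showing that a repetitive Meyer set $\Lambda$ itself has $d$ linearly independent topological eigenvalues. For this I would invoke Meyer's structure theorem: $\Lambda$ is a relatively dense subset of a regular model set $M$ arising from a cut-and-project scheme $(\R^d,H,L)$. The hull $\Omega_M$ admits the canonical torus parametrization, a continuous equivariant surjection onto the compact abelian group $(\R^d\times H)/L$, whose dual contains a rank-$d$ lattice of characters of $\R^d$. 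Using the inclusion $\Lambda\subset M$, repetitivity, and FLC, one shows this parametrization descends to a continuous equivariant map $\Omega_\Lambda\to(\R^d\times H)/L$; indeed, each $\omega\in\Omega_\Lambda$ inherits a well-defined ``internal coordinate'' modulo $L$ because the Meyer property forces the internal coordinates of any two points of $\omega$ to differ by an element of $L$. Composing with the relevant characters yields $d$ linearly independent continuous eigenfunctions.

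For the reverse direction, assemble the given continuous eigenfunctions $f_1,\ldots,f_d$ (normalized to be $S^1$-valued) into a continuous equivariant surjection $F\colon\Omega\to\TM^d$, where $\R^d$ acts on $\TM^d$ via the invertible matrix $\mathbf{M}$ whose rows are $\beta_1,\ldots,\beta_d$. Under FLC, the fibers of $F$ are totally disconnected (locally $\Omega$ splits as $U\times$(Cantor), and the equivariant map $F$ is injective on each local slice in the $U$-direction), endowing $\Omega$ with the structure of a Cantor fiber bundle over $\TM^d$. To produce a Meyer set topologically conjugate to $\Lambda$, I would choose a clopen subset $C\subset\Omega$ on which $F|_C$ is injective and whose $\R^d$-translates separate points of $\Omega$, and set $\Lambda'(\omega):=\{t\in\R^d:\omega-t\in C\}$. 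Uniform discreteness of $\Lambda'(\omega)$ and relative density follow from $C$ being clopen, compactness of $\Omega$, and minimality of the action, so $\Lambda'(\omega)$ is Delone. The Meyer property is the crucial point: if $t,s\in\Lambda'(\omega)$ then $F(\omega-t),F(\omega-s)\in F(C)$, forcing $\mathbf{M}(t-s)\bmod\Z^d$ into the small set $F(C)-F(C)\subset\TM^d$, which lifts to a uniformly discrete constraint on $t-s$ in $\R^d$, and hence $\Lambda'(\omega)-\Lambda'(\omega)$ is uniformly discrete. The assignment $\omega\mapsto\Lambda'(\omega)$, being continuous, equivariant, and injective (by the separation property of $C$), is the desired topological conjugacy.

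The main obstacle is the reverse direction, specifically the simultaneous selection of the clopen set $C$ satisfying both required properties: small enough that $F|_C$ is injective (so the above Meyer estimate applies) yet rich enough that its $\R^d$-translates separate points of $\Omega$ (so that $\omega\mapsto\Lambda'(\omega)$ is a genuine conjugacy rather than merely a factor map). Such a choice exists only because of the full Cantor fiber bundle structure of $\Omega$ over $\TM^d$ together with expansiveness of the canonical translation action; both will need to be derived from $d$ linearly independent topological eigenvalues plus FLC before $C$ can be produced.
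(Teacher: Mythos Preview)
Your overall architecture matches the paper's --- reduce the forward direction to the case of an actual Meyer set, and for the reverse direction pass through a Cantor-bundle structure over $\TM^d$ --- but both halves contain gaps as written.

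In the forward direction, the step ``the torus parametrization descends to $\Omega_\Lambda$'' is precisely the hard part, not a preliminary. There is no natural map $\Omega_\Lambda\to\Omega_M$ (limits of translates of $\Lambda$ need not sit inside limits of translates of $M$), and showing directly that $\Lambda-t_n\to\omega$ in $\Omega_\Lambda$ forces $[(t_n,0)]$ to converge in $(\R^d\times H)/L$ amounts to proving that the relevant characters of the torus pull back to continuous eigenfunctions --- exactly the conclusion you want. The paper sidesteps this by using characterization~(\ref{item-epsilon}) of Meyer sets rather than the model-set embedding: for $\beta\in\Delta^\epsilon$ with $\Delta=\Lambda-\Lambda$ and $\epsilon<1$, the phase $\theta$ of $e^{2\pi i\beta}$ restricted to $\Lambda$ is a bounded $0$-cochain whose coboundary is strongly pattern equivariant, and the cochain lemmas (Proposition~\ref{prop-2}, Corollary~\ref{cor-exact}, Lemma~\ref{lem-wP}) then force $e^{2\pi i\beta}$ to be weakly pattern equivariant. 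This produces a relatively dense set of topological eigenvalues without first constructing any global torus map.

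In the reverse direction your Meyer argument does not go through. First, $\Omega$ is connected (it is the closure of a single $\R^d$-orbit under a minimal action), so there is no nontrivial clopen $C\subset\Omega$; you must mean $C$ clopen in a transversal. More seriously, the constraint $\mathbf{M}(t-s)\bmod\Z^d\in F(C)-F(C)$ only confines $t-s$ to a \emph{thickened} lattice $\mathbf{M}^{-1}\Z^d+B_\epsilon$, and that set is not uniformly discrete; a Delone subset of it can easily fail to be Meyer (perturb each integer by a small amount and the difference set accumulates). The paper instead takes the transversal to be a single fiber $\Xi=F^{-1}([0])$, so that return vectors lie in the lattice $L=\mathbf{M}^{-1}\Z^d$ \emph{exactly} --- but then $F|_\Xi$ is constant and your injectivity mechanism evaporates. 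What replaces it is expansivity: a clopen \emph{partition} of $\Xi$ into sets of diameter below the expansivity constant embeds $\Xi$ as a subshift of $\aaa^{\Z^d}$, and the associated tiling by decorated fundamental domains of $L$ has point set contained in $L+F$ for a finite set $F$, hence is Meyer. Your final paragraph correctly identifies expansivity as the crux, but the single-$C$ formulation with ``$F|_C$ injective'' cannot simultaneously deliver the Meyer estimate and the conjugacy; a partition, not a single set, is what is needed.
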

Our theorem can be combined with a characterization of regular model sets given in \cite{BLM05} 
to obtain:
\begin{thm}\label{cor-main}
  A repetitive Delone set with finite local complexity is topologically conjugate to a repetitive 
  regular model set if and only if its dynamical system is uniquely ergodic and the continuous eigenfunctions separate almost all points of its hull.
\end{thm}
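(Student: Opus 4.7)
The strategy is to reduce to the characterization of regular model sets among Meyer sets proved in \cite{BLM05}, which states that a repetitive Meyer set is topologically conjugate to a repetitive regular model set if and only if its dynamical system is uniquely ergodic and its continuous eigenfunctions separate almost all points of the hull. Given this, both halves of the corollary reduce to matching up hypotheses and invoking Theorem \ref{main} to pass between Meyer sets and sets topologically conjugate to Meyer sets.

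For the forward direction, assume $\Lambda$ is topologically conjugate to a repetitive regular model set $\Lambda_0$. Then $\Lambda_0$ is Meyer, and by \cite{BLM05} its hull is uniquely ergodic and its continuous eigenfunctions separate almost every point. Unique ergodicity is preserved under topological conjugacy, and continuous eigenfunctions pull back to continuous eigenfunctions with the same eigenvalues under a conjugacy; the almost-sure separation property transfers because the unique invariant measure transfers. Hence the hull of $\Lambda$ has both properties.

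For the reverse direction, suppose $(\Omega,\R^d)$ is uniquely ergodic and its continuous eigenfunctions separate $\mu$-almost all points of $\Omega$. The first goal is to produce $d$ linearly independent topological eigenvalues so that Theorem \ref{main} applies. Let $\eee\subset{\R^d}^*$ be the group of topological eigenvalues and suppose, for contradiction, that $\eee$ spans a proper subspace of ${\R^d}^*$. Then there is a nonzero $v\in\R^d$ with $\beta(v)=0$ for every $\beta\in\eee$, so every continuous eigenfunction $f$ satisfies $f(\omega-tv)=f(\omega)$ for all $t\in\R$ and all $\omega\in\Omega$. For a repetitive Delone set with finite local complexity the $\R^d$-action has at most a discrete group of periods, so for $\mu$-a.e.\ $\omega$ the orbit points $\{\omega-tv:t\in\R\}$ are pairwise distinct in $\Omega$; but continuous eigenfunctions, being translation-invariant in the direction $v$, fail to separate them. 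This contradicts the separation hypothesis. Therefore $\eee$ contains $d$ linearly independent elements.

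By Theorem \ref{main}, $\Lambda$ is topologically conjugate to a (repetitive) Meyer set $\Lambda'$. Since unique ergodicity and the separation of $\mu$-a.a.\ points by continuous eigenfunctions are invariants of topological conjugacy, the hull of $\Lambda'$ also has both properties. The \cite{BLM05} characterization now applies to the Meyer set $\Lambda'$ and yields a topological conjugacy of its hull with that of a repetitive regular model set; composing the two conjugacies gives the desired conjugacy for $\Lambda$. The principal technical point to verify carefully is the preservation of the ``almost all'' separation property under topological conjugacy, which needs unique ergodicity together with the transfer of the invariant measure; the rest is a straightforward application of Theorem \ref{main} and the cited characterization.
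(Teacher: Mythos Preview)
Your argument is correct, and the overall architecture---reduce to the Meyer case via Theorem~\ref{main}, then invoke \cite{BLM05}---matches the paper. The one substantive difference is how you extract $d$ linearly independent topological eigenvalues from the separation hypothesis. The paper argues that separation means the factor map $\pi_{max}:\Omega\to\Omega_{max}$ onto the maximal equicontinuous factor is almost everywhere one-to-one, hence a measure isomorphism; since $C(\Omega_{max})$ is dense in $L^2(\Omega_{max},\eta)$, this forces pure point spectrum with \emph{all} eigenvalues topological (Property~3 of the \cite{BLM05} theorem), from which $d$ independent eigenvalues follow. Your route is more direct and more elementary: if the topological eigenvalues failed to span ${\R^d}^*$, continuous eigenfunctions would be invariant along some nonzero direction $v$, and since the period group of a repetitive FLC Delone set is discrete, small translates $\omega-tv$ are distinct from $\omega$ yet not separated. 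This is a nice shortcut that avoids the measure-theoretic detour through $\Omega_{max}$.

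One caveat: the version of \cite{BLM05} you quote for Meyer sets (``uniquely ergodic plus almost-sure separation by continuous eigenfunctions'') already suppresses Property~3. The form stated in the paper has four conditions, including pure point spectrum with all eigenvalues topological, so to apply it to your $\Lambda'$ you still need Property~4 $\Rightarrow$ Property~3. That implication is exactly the maximal equicontinuous factor argument the paper spells out; your proof uses it implicitly through the form of \cite{BLM05} you cite. It would be cleaner to either verify Property~3 for $\Lambda'$ explicitly (one sentence via $\pi_{max}$), or to cite the four-condition version and note that your separation hypothesis, once transferred to $\Lambda'$, yields Property~3 as well.
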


Our analysis draws on the use of pattern equivariant functions and of
certain fiber bundles. 
Theorem~\ref{main} is actually proven via a
third characterization of repetitive Meyer sets, which may well have its own
benefits:
\begin{thm}\label{main2}
  The dynamical system of a repetitive Delone set of finite local complexity is
  topologically conjugate to a
  bundle over the $d$-torus with totally disconnected compact fiber and 
  expansive canonical $\R^d$-action if
  and only if it is topologically conjugate to the dynamical system of a Meyer set. 
\end{thm}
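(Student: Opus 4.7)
My plan is to use Theorem~\ref{main} as a bridge and handle the two directions separately.

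For the direction bundle $\Rightarrow$ Meyer (up to conjugacy), assume $(\Omega,\R^d)$ is topologically conjugate to a bundle $p : E \to \Torus^d$ with totally disconnected compact fiber and expansive canonical $\R^d$-action compatible with $p$. The induced action on $\Torus^d$ is of the form $x \mapsto x - \beta(t) \bmod \Z^d$ for some linear $\beta : \R^d \to \R^d$. Minimality of the $\R^d$-action on $\Omega$ (automatic from repetitiveness of the underlying Delone set) descends through $p$ to minimality of the translation action on $\Torus^d$, forcing $\beta(\R^d)$ to be dense in $\Torus^d$ and hence, by dimension, $\beta$ to be a linear isomorphism. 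Composing $p$ with the $d$ coordinate characters of $\Torus^d$ then yields $d$ continuous eigenfunctions whose eigenvalues are the (linearly independent) rows of $\beta$, and Theorem~\ref{main} gives that $\Omega$ is conjugate to the hull of a Meyer set.

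For the converse, after replacing $\Omega$ by a topological conjugate I may assume $\Omega$ is the hull of a Meyer set $\Lambda$. By Theorem~\ref{main}, $\Lambda$ admits $d$ linearly independent topological eigenvalues $\beta_1,\ldots,\beta_d$ with continuous $\Torus$-valued eigenfunctions $f_1,\ldots,f_d$. Assemble them into
\[ \pi = (f_1,\ldots,f_d) : \Omega \longrightarrow \Torus^d, \qquad \pi(\omega - t) = \pi(\omega) + \beta(t) \bmod \Z^d, \]
where $\beta = (\beta_1,\ldots,\beta_d) : \R^d \to \R^d$ is the linear isomorphism assembled from the chosen eigenvalues. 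Surjectivity of $\pi$ is immediate, since the orbit of any point under $t \mapsto \pi(\omega) + \beta(t) \bmod \Z^d$ is already all of $\Torus^d$.

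The crux is that $\pi$ is a fiber bundle with totally disconnected fiber. I invoke the standard local product structure of the hull of an FLC Delone set: around each $\omega_0 \in \Omega$ there is a homeomorphism $\phi : B_r(0) \times T \to V \subset \Omega$, $(t,\xi) \mapsto \xi - t$, where $T$ is a clopen subset of the canonical transversal and is therefore totally disconnected. Equivariance of $\pi$ yields $\pi(\phi(t,\xi)) = \pi(\xi) + \beta(t) \bmod \Z^d$. After shrinking $r$ so that $\beta$ maps $B_r(0)$ homeomorphically onto an open neighborhood $W$ of $0$ in $\Torus^d$, each fiber $\pi^{-1}(y) \cap V$ (for $y$ close to $\pi(\omega_0)$) is cut out as the graph
\[ \{(\beta^{-1}(y - \pi(\xi)),\xi) : \xi \in T, \ y - \pi(\xi) \in W\} \]
over a clopen subset of $T$, depending continuously on $y$. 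This simultaneously provides a local trivialization of $\pi$ over a neighborhood of $\pi(\omega_0)$ and realizes the fiber locally as a clopen subset of $T$, so the fiber is totally disconnected. Expansiveness of the canonical $\R^d$-action on $\Omega$ follows from FLC by a standard argument.

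The main obstacle is the fiber bundle verification, and its crux is the local product structure of FLC hulls; once that tool is in hand, the graph-of-a-continuous-function description of the fibers makes the local trivializations essentially explicit, and total disconnectedness drops out for free. The surjectivity-via-minimality step in the first direction is a small technical wrinkle that is easy to overlook but needed to extract linearly independent eigenvalues from the abstract bundle data.
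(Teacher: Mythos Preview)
Your argument is essentially correct but organized quite differently from the paper, and it leans on Theorem~\ref{main} in a way that is logically delicate in this paper's context.

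In the paper, Theorems~\ref{main} and~\ref{main2} are proved \emph{simultaneously} via three statements: (1) repetitive Meyer $\Rightarrow$ $d$ independent topological eigenvalues; (2) $d$ eigenvalues $\Rightarrow$ bundle with totally disconnected fiber and expansive canonical action; (3) such a bundle $\Rightarrow$ conjugate to a Meyer set. Statement~3 is proved directly by symbolic coding: expansiveness of the induced $\Z^d$-action on the fiber $\Xi$ lets one embed $\Xi$ into a subshift over a finite alphabet, and the suspension then realizes $\Omega$ as a tiling by decorated fundamental domains of the lattice $L$, whose vertex-plus-decoration set is manifestly Meyer. Your route for bundle $\Rightarrow$ Meyer instead pulls back the coordinate characters of $\Torus^d$ to get $d$ topological eigenvalues and then quotes Theorem~\ref{main}. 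That is valid once Theorem~\ref{main} is in hand, but in this paper Theorem~\ref{main} itself is established through Statement~3, so your argument would be circular inside the paper's development. The paper's coding argument is what actually does the work; your shortcut hides it.

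For Meyer $\Rightarrow$ bundle, your argument is close to the paper's Statement~2. The paper's trivialization is slightly cleaner: once $\pi:\Omega\to\R^d/L$ is an equivariant surjection, the action of $t\in\R^d$ itself carries the fiber $\Xi=\pi^{-1}([0])$ homeomorphically onto nearby fibers, giving the local product $\Xi\times U\to\pi^{-1}(U)$ directly without your graph construction. Your version works too, provided you shrink the transversal $T$ so that $\pi(T)$ has small diameter (a point you should make explicit). For expansiveness you appeal to a ``standard argument''; the paper spells this out using FLC.

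One genuine slip: your claim that minimality of the torus action forces $\beta(\R^d)$ dense and ``hence, by dimension, $\beta$ a linear isomorphism'' is false as stated---an irrational line acts minimally on $\Torus^2$ with $\beta$ of rank~$1$. Under the paper's definition of \emph{canonical} action the torus is identified with $\R^d/L$ and the action is literal translation, so $\beta$ is an isomorphism by definition and the issue is moot. If you intend a more general setup, the correct argument is that any $t_0\in\ker\beta$ acts trivially on $\Torus^d$, hence continuously on each totally disconnected fiber, hence trivially (connectedness of $\R$), forcing $t_0$ to be a period; combined with local freeness of the hull action this bounds $\ker\beta$.
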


If we add the requirement that the Delone set is not fully periodic then the fiber
of the bundle is a Cantor set.

The above theorems characterize repetitive Delone sets up to topological
conjugacy, by which we mean topological conjugacy of their associated
dynamical systems. To obtain a geometric characterization one needs to
understand to what extent the dynamical system of a Delone set
determines the Delone set and which geometric propeties of a Delone
set are preserved under topological conjugacy. For this the following
theorem is useful.
\begin{thm}\label{main3}
  Any topological conjugacy between the dynamical systems of Delone
  sets of finite local complexity is the composition of a shape
  conjugation with a mutual local derivation.  Furthermore, the shape conjugation
  can be chosen to move points by an arbitrarily small amount.
\end{thm}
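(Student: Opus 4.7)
The plan is to produce a decomposition $\Phi = h \circ \Psi$, where $\Psi:\Omega_\Lambda \to \Omega_{\tilde\Lambda}$ is a shape conjugation whose point displacements are bounded by a prescribed $\delta > 0$, and $h:\Omega_{\tilde\Lambda}\to \Omega_{\Lambda'}$ is a mutual local derivation. The central tool is that continuity of $\Phi$ on the compact hull, combined with finite local complexity on both sides, yields a uniform local rule: for any tolerance $\delta>0$ there is a radius $R>0$ such that whenever two elements $\omega_1,\omega_2\in \Omega_\Lambda$ agree on $B_R(0)$, the Delone sets $\Phi(\omega_1)$ and $\Phi(\omega_2)$ coincide combinatorially on $B_2(0)$ up to individual point displacements of magnitude at most $\delta$.

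From this local rule I extract a strongly pattern equivariant displacement field. The $R$-pattern of $\omega$ around a point $x\in\omega$ takes only finitely many values by FLC, and each value canonically determines the points of $\Phi(\omega)$ in $B_2(x)$ up to an error bounded by $\delta$. After centering this information at $x$, I obtain a strongly pattern equivariant vector field $v$ with $|v(x)|\le \delta$, and moving each $x\in\omega$ to $x+v(x)$ produces a Delone set $\Psi(\omega)$ whose point positions match those of $\Phi(\omega)$ while the residual combinatorial data (labels, multiplicities, extra or missing points) are recorded in a decoration. Varying $\omega$ defines the shape conjugation $\Psi$ onto some hull $\Omega_{\tilde\Lambda}$, and by construction $h:=\Phi\circ\Psi^{-1}$ acts by finite-range local rules in both directions, hence is a mutual local derivation.

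The main obstacle is constructing $v$ in a genuinely canonical, pattern equivariant, and $\R^d$-equivariant way. Since $\Phi(\omega)$ need not contain a point near every $x\in\omega$ and $\omega$ may have points with no close counterpart in $\Phi(\omega)$, one cannot simply take $v(x)$ to be a vector pointing from $x$ to ``its image''. The workaround is to let the intermediate set $\tilde\omega:=\Psi(\omega)$ carry the full combinatorial content of $\Phi(\omega)$ locally, while only positions are deformed by $v$; the pairing between the $\omega$-pattern and the $\Phi(\omega)$-pattern around $x$ is made canonical by choosing, for each local pattern of $\omega$, a fixed geometric matching to the corresponding $\Phi(\omega)$-pattern at the reference origin. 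Verifying simultaneously that (i) $v$ is strongly pattern equivariant and small, (ii) $\Psi(\omega)$ remains a Delone set with FLC (which forces $\delta$ smaller than half the minimum separation), and (iii) the residual map $h$ is genuinely MLD, is the delicate part. Once this is in place, shrinking $\delta$ at the cost of enlarging $R$ yields the final assertion that the shape conjugation can be made to move points by an arbitrarily small amount.
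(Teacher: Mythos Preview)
There is a genuine gap, and it lies precisely in the claim that the displacement field $v$ can be chosen \emph{strongly} pattern equivariant. If $v$ were strongly PE with respect to $\omega$, then $\Psi(\omega)=\{x+v(x):x\in\omega\}$ would be locally derivable from $\omega$, so $\Psi$ would itself be a local derivation (the paper notes this explicitly: a shape semi-conjugacy whose displacement cochain is strongly PE is a local derivation). Your decomposition $\Phi=h\circ\Psi$ would then express $\Phi$ as a composition of two MLD maps, hence $\Phi$ would be MLD. But topological conjugacies that are not MLD do exist (the non-Meyer shape change in Section~\ref{Counter-ex} is one), so this cannot work in general.

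The underlying problem is the one you yourself flag: the $R$-pattern of $\omega$ at $x$ determines the positions of $\Phi(\omega)$ near $x$ only \emph{up to an error} $\delta$, not exactly. A strongly PE field $v$ depends only on that $R$-pattern, so $x+v(x)$ cannot be made to land exactly on a point of $\Phi(\omega)$; the residual discrepancy is a genuine positional error, not ``combinatorial data'' a decoration can absorb, and it would force $h$ to move points, making $h$ a shape conjugacy rather than an MLD. The paper resolves this by reversing the order of the factors: first an MLD $\psi_\epsilon:\Omega_\Lambda\to\Omega_{\Lambda_\epsilon}$, where $\Lambda_\epsilon$ is built so that its points are in bijection with and $\epsilon$-close to those of $\Lambda'$; then the remaining map $s_\epsilon=\phi\circ\psi_\epsilon^{-1}:\Omega_{\Lambda_\epsilon}\to\Omega_{\Lambda'}$ is a shape conjugacy. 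Its displacement cochain $F$ is only \emph{weakly} PE, and the key lemma you are missing is Corollary~\ref{cor-exact}: a bounded $0$-cochain whose coboundary is strongly PE is automatically weakly PE. Uniform continuity of $\phi$ gives precisely that $\delta F$ is strongly PE (relative positions of $\Lambda'$-points are determined exactly by local $\Lambda_\epsilon$-patterns, since overall translations cancel in a coboundary), boundedness is the $\epsilon$-closeness, and Corollary~\ref{cor-exact} then closes the argument.
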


A natural question is whether a Delone set of
finite local complexity (FLC) that
is shape conjugate to a Meyer set must itself be a Meyer set. 
As a consequence of the above theorem, any FLC Delone set that is
shape conjugate to a Meyer set is arbitrarily close in the Hausdorff
distance to a Meyer set. However, this does not suffice to guarantee that it
is Meyer. Indeed, we provide a counterexample. The Meyer property is invariant
under local derivations but not under shape conjugacy.

It is not surprising that Meyer sets show up as solutions to the
topological version of essential diffractivity. They have been
investigated throughout the efforts to describe sets which are pure
point diffractive. Lagarias (see \cite{Lagarias}, Problem 4.10) 
suggested the problem of proving that a repetitive pure point diffractive set is 
necessarily Meyer. This turns out to be false. A recent example of \cite{FS}, 
called the scrambled Fibonacci tiling, provides a counterexample. 
It has pure point dynamical spectrum, but none of the eigenfunctions
can be chosen continuous, and hence it is not a Meyer tiling
(something that can also be checked directly). 

A variation of the scrambled Fibonacci yields a Meyer set that has dense pure
point dynamical spectrum but only relatively dense topological
dynamical spectrum. The dynamical spectrum has rank 2 while the topological
spectrum only has rank 1. 

\section{Preliminaries}

We assume that the reader is familiar with the standard objects and
concepts from the theory of tilings or Delone sets, in particular with
the notions of finite local complexity (FLC) and repetitivity. We also
suppose that the construction of the hull $\Omega_\Lambda$ and the
dynamical system $(\Omega_\Lambda,\R^d)$
associated with a tiling or a Delone set $\Lambda\subset\R^d$ is known.  
Where necessary we
assume the existence of frequencies of patches defined by a van Hove
sequence and hence of an ergodic probability measure $\mu$ on the
hull, see, for instance, \cite{Solomyak}.

An important well-known concept for us is that of a local derivation
(or local map) between two point sets or tilings. A 
point set $\Lambda'$ is locally derivable from a point set $\Lambda$ 
if there exists $R>0$ such that the question of whether a point $x$
belongs to $\Lambda'$ can be answered (without direct reference to $x$) 
by inspection of $B_R(0)\cap(\Lambda-x)$ \cite{BJS}. 
This defines a factor map between the hulls $\Omega_\Lambda$ and $\Omega_{\Lambda'}$
which is called a local derivation.
The definition applies literally to tilings if we
identify the tiling with a subset of $\R^d$ of boundary points of its
tiles. There are many ways to convert Delone sets to tilings or to
convert tilings to Delone sets in a mutually locally derivable way,
i.e.\ with bijective maps with are in both directions local derivations. We simply
say then that the two objects are MLD.  Given a (polyhedral) FLC
tiling $\T$, the set of its vertices is an FLC Delone set which is locally derivable from the tiling. 
Given an FLC Delone set $\Lambda$, the collection of Voronoi cells is an FLC
tiling, and the vertices of the dual to this Voronoi tiling (the so-called
Delone tiling) are precisely the elements of $\Lambda$. We denote by
$\T_\Lambda$ the Delone tiling of $\Lambda$ which is clearly mutually locally
derivable with $\Lambda$. 
 This paper is
written primarily in the language of Delone sets, but we freely use
theorems about tilings.

When we speak about eigenfunctions and eigenvalues 
for a Delone set $\Lambda$ (or a
tiling), what we mean are eigenfunctions for the system
$(\Omega_\Lambda,\R^d,\mu)$, and when we say that two Delone sets $\Lambda$
and $\Lambda'$ are topologically conjugate we mean that their
dynamical systems $(\Omega_\Lambda,\R^d)$ and $(\Omega_{\Lambda'},\R^d)$
are topologically conjugate.

\section{Meyer sets}
A subset $\Lambda\subset \R^d$ is called {\em harmonious} if any
algebraic character on the group $\langle \Lambda\rangle$ generated by
$\Lambda$ can be arbitrarily well approximated by a continuous
character. Continuity refers here to the relative topology of
$\langle\Lambda\rangle\subset \R^d$ and so a continuous character is
of the form $e^{2\pi i\beta}$ for some $\beta\in {\R^d}^*$. Very
roughly speaking, harmonious sets are (potentially) non-periodic sets to
which harmonic analysis can still be applied. Harmonious  sets
play an important role in Meyer's book
\cite{Meyer72}. In particular the class of harmonious sets which Meyer
called {\em model sets} (see \cite{Moody}) plays an important role in spectral synthesis
\cite{Meyer72} and in optimal and universal sampling in information
theory \cite{Meyer72,MateiMeyer08,Meyer12}. After the discovery of
quasicrystals, the relevance of harmonious sets and model sets to the
diffraction was recognized
\cite{Meyer95,Moody,Lagarias96} and nowadays a relatively dense
harmonious set is called a {\em Meyer set}.  A tiling is a Meyer
tiling if it is MLD to a Meyer set.

Harmonious sets allow for various different characterizations.
Some of them are analytical but there is also a surprisingly simple 
geometric criterion. 
Let $$\Lambda^\epsilon = \{ \beta \in \R^{d*} | \forall a \in \Lambda, 
|1 - \exp(2 \pi i \beta(a))| < \epsilon\}.$$ The following conditions
are equivalent \cite{Meyer72,Moody,Lagarias96,Lagarias99}.
\begin{enumerate}
\item{$\Lambda$ is harmonious, that is, for any group homomorphism $\phi:\langle\Lambda\rangle\to S^1$ and any $\epsilon>0$, there is a $\beta
\in \R^{d*}$ such that $|\phi(a) - \exp(2 \pi i \beta(a))|<\epsilon$ for
all $a \in \Lambda$.}
\item{$\Lambda^\epsilon$ is relatively dense for all $\epsilon>0$.}
\item{$\Lambda^\epsilon$ is relatively dense for some $0<\epsilon <1/2$.}\label{item-epsilon}
\item{$\Lambda - \Lambda$ is uniformly discrete.}\label{item-discrete}
\item{$\Lambda$ is uniformly discrete and there exists a finite set 
$F$ such that $\Lambda-\Lambda = \Lambda + F$.}
\item{$\Lambda$ is a subset of a model set.}
\end{enumerate}
The two characterizations we will use here are (\ref{item-epsilon}) and
(\ref{item-discrete}).  It follows from (\ref{item-discrete}) that a Meyer
set is an FLC Delone set. Furthermore, inside the class of Delone sets (\ref{item-discrete}) is preserved under local derivation \cite{Lagarias99} and so a Delone set which is MLD to a Meyer set is also a Meyer set.
Note that $\Lambda - \Lambda$ is always locally finite
if $\Lambda$ is FLC, but the condition that
$\Lambda - \Lambda$ is {\em uniformly} discrete is very strong: it
implies, for instance, that $\Delta=\Lambda-\Lambda$ is also a Meyer
set \cite{Moody}. Hence $\Delta^\epsilon$ is relatively dense
for all $\epsilon > 0$; that is, for a relatively dense set of
$\beta$'s we can then find a plane wave $e^{2\pi i\beta}$ that has
nearly the same phase at all points in $\Lambda$. This will
be the key to constructing continuous eigenfunctions.
%
\bigskip

Model sets are point sets obtained from a cut \& project scheme \cite{Moody}
with very strong properties which makes them suitable for the description of quasicrystals, in particular if they are regular\footnote{``Regular'' means that the boundary of the so-called window (or acceptance domain) has measure zero.} \cite{Schlottmann}. 
To our knowledge, no direct geometric characterization of model sets is known. Using their associated dynamical systems, regular model sets can be characterized in the following way:
\begin{thm}[\cite{BLM05}]
$(\Omega,\R^d)$ is the dynamical system of a repetitive regular model set if and only if
\begin{enumerate}
\item all elements of $\Omega$ are Meyer sets,
\item $(\Omega,\R^d)$ is minimal and uniquely ergodic,
\item $(\Omega,\R^d)$ has pure point dynamical spectrum and all eigenvalues are topological, and
\item the continuous eigenfunctions separate almost all points of $\Omega$.
\end{enumerate}  
\end{thm}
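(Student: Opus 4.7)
The plan is to prove each implication separately. The forward direction draws entirely on known results about regular model sets, while the reverse direction is the substantial content and requires extracting a cut \& project scheme from the abstract spectral data.

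For the forward direction, assume $(\Omega, \R^d)$ is the dynamical system of a repetitive regular model set $\Lambda$ arising from a cut \& project scheme $(\R^d, H, L)$ with compact window $W \subset H$ whose topological boundary has Haar measure zero. Condition (1) follows because every point set in the hull of a model set is itself a model set for the same scheme with translated window, hence a Meyer set. Condition (2) follows from repetitivity (minimality) together with Weyl-type equidistribution of windows under translation in $H$ (unique ergodicity). For (3) and (4), Schlottmann's theorem furnishes the \emph{torus parametrization} $\pi : \Omega \to \mathbb{A} := (\R^d \times H)/L$, which is the maximal equicontinuous factor, with continuous eigenfunctions obtained by pulling back characters of the compact abelian group $\mathbb{A}$; regularity of $W$ moreover makes $\pi$ almost everywhere $1$-to-$1$, so these continuous eigenfunctions separate points off a $\mu$-null set.

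For the reverse direction, begin by using (3) to identify the maximal equicontinuous factor $\mathbb{A}$ of $(\Omega,\R^d)$ as the Pontryagin dual of the group $\mathcal{E}$ of continuous eigenvalues; choose a maximal system of continuous eigenfunctions to define a factor map $\pi : \Omega \to \mathbb{A}$. Condition (4), combined with the minimality in (2), upgrades this to an \emph{almost $1$-to-$1$} extension: the set $\{a\in\mathbb{A} : |\pi^{-1}(a)|=1\}$ is residual and of full Haar measure. In other words, $(\Omega,\R^d)$ is almost automorphic over its equicontinuous factor. Since $\R^d$ acts on $\mathbb{A}$ with dense orbit, a standard structure argument presents $\mathbb{A}$ in the form $(\R^d\times H)/L$ for some locally compact abelian group $H$ (the candidate internal space) and uniform lattice $L$, chosen so that the $\R^d$-action on $\mathbb{A}$ is the one induced from translation on the first factor.

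It remains to construct a regular window $W \subset H$ and show the given Delone set is the resulting model set. Fix a generic $\omega \in \Omega$ whose fiber $\pi^{-1}(\pi(\omega))$ is a singleton, and consider the map $L \to \R^d \times H$ composed with the inclusion $\omega \hookrightarrow \R^d$: define $W$ as the closure in $H$ of $\pi_H$ applied to the lattice points whose $\R^d$-component lies in $\omega$. The Meyer hypothesis (1) is essential here: uniform discreteness of $\omega-\omega$ transports under $\pi$ to boundedness of the corresponding subset of $H$, ensuring $W$ is \emph{compact}, while the observation that $\omega$ is relatively dense ensures $W$ has nonempty interior. Unique ergodicity (2) together with the almost $1$-to-$1$ property (4) then forces the boundary $\partial W$ to have Haar measure zero, since otherwise a positive-measure set of fibers would fail to be singletons — contradicting (4). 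The resulting model set $\Lambda_W$ is easily verified to generate the same hull as $\omega$, completing the proof. The main obstacle is the window construction: one must show that compactness and regularity of $W$ both follow from the Meyer property together with the almost-automorphic structure; without (1) one only obtains an abstract almost-automorphic system with no reason for the transversal to be bounded in $H$.
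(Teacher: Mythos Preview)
This theorem is not proved in the present paper: it is quoted from \cite{BLM05} (note the citation in the theorem header) and is used as an input to derive Theorem~\ref{cor-main}. There is therefore no proof in the paper against which to compare your proposal.

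That said, your outline follows the strategy of \cite{BLM05}: the forward direction via Schlottmann's torus parametrization, and the converse by reconstructing a cut-and-project scheme from the maximal equicontinuous factor and then extracting a window. Two places in your sketch are compressed past the point of being checkable. First, the claim that ``uniform discreteness of $\omega-\omega$ transports under $\pi$ to boundedness of the corresponding subset of $H$'' is not a direct implication: uniform discreteness in $\R^d$ says nothing by itself about the internal coordinates. In \cite{BLM05} compactness of the window comes instead from compactness of the canonical transversal in $\Omega$ and continuity of $\pi$; the Meyer hypothesis enters earlier, to guarantee that the eigenvalue group is large enough for the scheme $(\R^d,H,L)$ to exist with the right properties. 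Second, condition~(4) is a measure-theoretic statement (``almost all''), and upgrading it to the \emph{topological} almost one-to-one property (a residual set of singleton fibers) requires an additional argument using upper semicontinuity of the fiber cardinality together with minimality. These are precisely the steps where the substantive work of \cite{BLM05} lies, so if you intend a self-contained proof you should expand them.
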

Since the maximal equicontinuous factor of a dynamical system is the spectrum of the algebra generated by its continuous eigenfunctions, the last property is equivalent to the fact that
the factor map $\pi_{max}:\Omega\to \Omega_{max}$
from $\Omega$ to its maximal equicontinuous factor $\Omega_{max}$ is almost everywhere 1:1, 
and hence induces a measure isomorphism between $L^2(\Omega,\mu)$ and $L^2(\Omega_{max},\eta)$ ($\eta$ is the Haar measure on $\Omega_{max}$) (see e.g.\ \cite{BargeKellendonk}). Taking into account that $C(\Omega_{max})$ is dense in  $L^2(\Omega_{max},\eta)$, it follows that 
Property~4 implies Property~3.
Moreover, by Theorem~\ref{main}, Property~3 implies that 
$(\Omega,\R^d)$ is topologically conjugate to the dynamical system of a Meyer set; hence
the above theorem simplifies to Theorem~\ref{cor-main}.

\section{Pattern equivariant functions and cochains}
We recall the definitions of pattern equivariant functions and cochains for
uniformly discrete FLC point sets or FLC tilings \cite{K,integer}. 

Let $\Lambda$ be a uniformly
discrete point set or tiling in $\R^d$ of finite local complexity. We call a
function $f:\R^d\to \C$ (or into any abelian group $A$) \sP\
if there exists a radius $R$ such that $f(x)=f(y)$ 
for any two points 
$x,y$ with $(\Lambda-x)\cap B_R(0) = (\Lambda-y)\cap B_R(0)$.  
In other words, for each $x$, 
$f(x)$ is determined by the pattern of points within a 
fixed finite distance $R$ of $x$. 
We can also consider a tiling 
as a decomposition of $\R^d$ into 0-cells,
1-cells, $\ldots,$ and $d$-cells. A $k$-cochain assigns a number to each
$k$-cell. Such a cochain is called \sP\ if the 
value of a $k$-cell depends only on the pattern of tiles within a distance
$R$ of that $k$-cell. 

An equivalent definition of pattern equivariance involves the 
description of the hull $\Omega_\Lambda$ as the inverse limit of
approximants $\Gamma_R$ 
that describe the point set $\Lambda$ 
(or tiling) out to distance $R$ around the
origin. There is a natural map from $\R^d$ to $\Gamma_R$ sending $x$ to the
equivalence class of $\Lambda-x$. A function or cochain is \sP\
if it is the pullback of a function or cochain on a fixed approximant
$\Gamma_R$.

Weakly pattern equivariant functions and cochains are defined as 
limits of \sP\ objects. The precise definition
depends on the category that we are working in.
\begin{itemize}
\item A continuous function $f$ is \wP\  (in the topological sense)
if it can be approximated  in the uniform topology
  by strongly pattern equivariant continuous functions, i.e.\ for all
  $\epsilon>0$ there exists a strongly pattern equivariant continuous
  function $h$ such that $\|f-h\|_\infty < \epsilon$.
\item A cochain $f$ on the tiling $\T$ derived from $\Lambda$ is \wP\
  if it can be approximated in the sup-norm by
  \sP\ cochains, i.e.\ for all $\epsilon>0$
  there exists a strongly pattern equivariant cochain $h$ such that
  $\sup_{c}|f(c)-h(c)| < \epsilon$.
  \end{itemize}

\subsection{Dynamical eigenfunctions}
Pattern equivariant functions can be used to describe
eigenfunctions, i.e.\ solutions to equations (\ref{eq-EV}).  
\begin{lem} \label{prop-EV}
Let $\Lambda$ be a repetitive FLC Delone set. $\beta$ is a topological eigenvalue for $\Lambda$ if and only if $e^{2\pi i \beta}$ is \wP\ in the topological sense.
\end{lem}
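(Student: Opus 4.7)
The plan is to use the orbit map $\iota:\R^d\to\Omega_\Lambda$ defined by $\iota(t)=\Lambda-t$ as the bridge between the two statements. Repetitivity implies minimality of the $\R^d$-action, so $\iota(\R^d)$ is dense in $\Omega_\Lambda$. Combined with the standard description $\Omega_\Lambda=\varprojlim\Gamma_R$, this gives the key correspondence: strongly pattern equivariant continuous functions on $\R^d$ are exactly pullbacks via $\iota$ of continuous functions on some approximant $\Gamma_R$, and by Stone--Weierstrass such pullbacks are uniformly dense in $\{f\circ\iota:f\in C(\Omega_\Lambda)\}$.

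For the forward implication, I would start from a continuous eigenfunction $f\in C(\Omega_\Lambda)$ for $\beta$. The eigenvalue equation forces $|f|$ to be translation-invariant; continuity together with minimality then forces $|f|$ to be a constant, necessarily nonzero (otherwise $f\equiv 0$ by density of the orbit). After rescaling so that $f(\Lambda)=1$, the pullback $f\circ\iota$ equals the character $e^{2\pi i\beta}$ on $\R^d$, and uniform approximation of $f$ on $\Omega_\Lambda$ by pullbacks from approximants delivers strongly pattern equivariant continuous approximations of $e^{2\pi i\beta}$.

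For the reverse implication, I would take strongly pattern equivariant continuous $h_n$ with $\|h_n-e^{2\pi i\beta}\|_\infty\to 0$, write each $h_n=\tilde h_n\circ\iota$ with $\tilde h_n\in C(\Omega_\Lambda)$, and argue that $(\tilde h_n)$ is Cauchy in the sup norm on $\Omega_\Lambda$: it is Cauchy on the dense set $\iota(\R^d)$ because $(h_n)$ is, and a continuous function on the compact space $\Omega_\Lambda$ is bounded by $\varepsilon$ whenever its restriction to a dense subset is. The uniform limit $\tilde f\in C(\Omega_\Lambda)$ satisfies $\tilde f(\Lambda-t)=e^{2\pi i\beta(t)}$. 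To verify the eigenvalue equation, I would observe that for fixed $s$ both $\omega\mapsto\tilde f(\omega-s)$ and $\omega\mapsto e^{2\pi i\beta(s)}\tilde f(\omega)$ are continuous on $\Omega_\Lambda$ and agree on $\iota(\R^d)$ (both equal $e^{2\pi i\beta(t+s)}$ at $\omega=\Lambda-t$), hence agree on all of $\Omega_\Lambda$.

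The main obstacle is the converse direction, where an approximation scheme on $\R^d$ has to be upgraded to a genuine continuous eigenfunction on $\Omega_\Lambda$. Density of the orbit does the whole job: without minimality one could promote neither the uniform Cauchy property of $(\tilde h_n)$ from $\iota(\R^d)$ to $\Omega_\Lambda$, nor the eigenvalue identity from the orbit to the whole hull. Everything else is a routine application of the inverse limit description of $\Omega_\Lambda$ and the definition of weak pattern equivariance.
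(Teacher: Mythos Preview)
Your proof is correct and follows the same approach as the paper, which simply cites the known identification of weakly pattern equivariant functions with restrictions to the orbit of $\Lambda$ of continuous functions on $\Omega_\Lambda$; you have spelled out the details of that identification. One minor point: the density of $\iota(\R^d)$ in $\Omega_\Lambda$ that carries your whole argument is automatic from the definition of the hull as an orbit closure and does not require repetitivity (minimality means \emph{every} orbit is dense, but you only use that this particular one is).
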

\begin{proof}
It has been known for some time that \wP\ functions in the topological sense
are precisely the restrictions of continuous functions on $\Omega_\Lambda$ to the
orbits through $\Lambda$.\footnote{This statement is e.g.\ implicit in \cite{K}
in the proof that \wP\ functions in the topological sense are isomorphic, as a $C^*$-algebra, to
$C(\Omega_\Lambda)$.} Clearly, any continuous solution of  (\ref{eq-EV}) restricts to a function proportional to $e^{2\pi i \beta}$ on the orbit through $\Lambda$.
\end{proof}

The following result is useful for determining whether 
an element $\beta\in{\R^d}^*$ is a topological
eigenvalue.
\begin{lem}\label{lem-wP}
Let $\Lambda\in\R^d$ be an FLC Delone set and let $f:\R^d\to \C$ be a continuous function.
$f$ is \wP\ if and only if it is uniformly continuous and for all $\epsilon>0$ there exists $R>0$ such that
$(\Lambda-x)\cap B_R(0) = (\Lambda-y)\cap B_R(0)$ implies $|f(x)-f(y)|<\epsilon$.
\end{lem}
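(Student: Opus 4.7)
The forward direction is the easier half. Any \sP\ continuous function $h$ of radius $R$ factors through the compact approximant $\Gamma_R$ via the canonical map $\pi_R\colon \R^d\to\Gamma_R$, $x\mapsto[\Lambda-x]$. Since $\pi_R$ is uniformly continuous in the natural tiling metric (a small translation of $x$ produces a small translation of the local pattern, hence close points in $\Gamma_R$) and $\Gamma_R$ is compact, $h$ is uniformly continuous. If $f$ is \wP, it is a uniform limit of such $h$'s and so inherits uniform continuity. For the patch condition, given $\epsilon>0$ pick $h$ \sP\ of radius $R$ with $\|f-h\|_\infty<\epsilon/3$; whenever $(\Lambda-x)\cap B_R(0)=(\Lambda-y)\cap B_R(0)$ we have $h(x)=h(y)$, so $|f(x)-f(y)|\le 2\|f-h\|_\infty<\epsilon$.

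For the backward direction the plan is to show that $f$ extends to a continuous function $\tilde f$ on the hull $\Omega_\Lambda$; weak pattern equivariance then follows from the identification (referred to in the footnote of Lemma~\ref{prop-EV}) of \wP\ continuous functions on the orbit through $\Lambda$ with $C(\Omega_\Lambda)$. Let $F\colon \R^d\to\Omega_\Lambda$ be the orbit map $F(x)=\Lambda-x$, whose image is dense. I would prove that whenever two sequences $(x_n),(y_n)$ satisfy $F(x_n),F(y_n)\to z\in\Omega_\Lambda$ in the tiling metric, one has $|f(x_n)-f(y_n)|\to 0$. This makes $\tilde f(z):=\lim_n f(x_n)$ well-defined on the dense orbit, and a routine uniform-convergence argument extends $\tilde f$ continuously to all of $\Omega_\Lambda$.

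The main technical step, and the part I expect to carry the weight of the argument, is bridging the gap between \emph{exact} patch equality (what the hypothesis controls) and the \emph{approximate} patch equality coded into the tiling metric. Given $\epsilon>0$ the pattern-approximation hypothesis supplies $R$ such that exact agreement on $B_R(0)$ yields $|f(x)-f(y)|<\epsilon/3$; uniform continuity supplies $\delta>0$ with $|t|<\delta\Rightarrow|f(x)-f(x+t)|<\epsilon/3$. The convergence $F(x_n),F(y_n)\to z$ provides, for all large $n$, translations $t_n,s_n$ of length less than $\delta$ with
\[
(\Lambda-(x_n+t_n))\cap B_R(0)\;=\;z\cap B_R(0)\;=\;(\Lambda-(y_n+s_n))\cap B_R(0).
\]
The exact patch equality between $x_n+t_n$ and $y_n+s_n$ combined with the two uniform-continuity estimates gives $|f(x_n)-f(y_n)|<\epsilon$.

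Once $\tilde f\in C(\Omega_\Lambda)$ is in hand, weak pattern equivariance is immediate: because $\Omega_\Lambda=\varprojlim\Gamma_R$ with compact $\Gamma_R$, every continuous function on $\Omega_\Lambda$ is a uniform limit of pullbacks of continuous functions on the $\Gamma_R$, and such pullbacks restrict on the orbit of $\Lambda$ to \sP\ continuous functions. The only non-routine ingredient is the exact-versus-approximate bridge described above; uniform continuity is precisely what is needed to absorb the small translational slack intrinsic to the tiling metric, and without that hypothesis the extension argument would fail.
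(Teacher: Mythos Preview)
Your forward direction matches the paper's (approximate $f$ by a \sP\ function to get the patch estimate; deduce uniform continuity from compactness of the hull, which is essentially what factoring through $\Gamma_R$ accomplishes).

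For the converse you take a genuinely different route. The paper builds the \sP\ approximant by hand: assuming $f$ real, it sets
\[
f_\epsilon(x)\;=\;\inf\{\,f(y):(\Lambda-x)\cap B_R(0)=(\Lambda-y)\cap B_R(0)\,\},
\]
observes that $f_\epsilon$ is \sP\ with $\|f-f_\epsilon\|_\infty\le\epsilon$, and uses uniform continuity of $f$ only to check that $f_\epsilon$ is continuous. Your approach is instead to extend $f$ to $\tilde f\in C(\Omega_\Lambda)$, using uniform continuity to absorb the small translations in the tiling metric and the patch hypothesis to handle exact agreement, and then to invoke the inverse-limit description $\Omega_\Lambda=\varprojlim\Gamma_R$ together with the identification of \wP\ functions with restrictions of $C(\Omega_\Lambda)$ (the footnote to Lemma~\ref{prop-EV}). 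Both arguments are correct. The paper's is more elementary and self-contained, yielding an explicit approximant without appealing to the hull or inverse limits; yours is more conceptual, makes transparent exactly why uniform continuity is the right hypothesis (it is what bridges exact patch equality to the tiling metric), and fits cleanly with the $C(\Omega_\Lambda)$ point of view already used in Lemma~\ref{prop-EV}.
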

\begin{proof}
A \wP\ continuous function extends to a continuous function on the hull of $\Lambda$. Since the hull is compact, this extension is 
uniformly continuous, implying that the original function is also uniformly continuous. To see the second property, 
approximate $f$ to within $\frac\epsilon 2$ by a \sP\ continuous function.

For the converse, let $f$ be a uniformly continuous function satisfying the second property of the lemma.  
Pick $\epsilon>0$ and $R>0$ such that
$(\Lambda-x)\cap B_R(0) = (\Lambda-y)\cap B_R(0)$ implies $|f(x)-f(y)|<\epsilon$. Without loss of generality we may assume that $f$ is real valued.
Define
$$f_\epsilon(x) := \inf\{f(y):(\Lambda-x)\cap B_R(0) = (\Lambda-y)\cap B_R(0)\}.$$
Then $f_\epsilon$ is \sP\ and $|f_\epsilon(x) - f(x)|\leq \epsilon$. Moreover, the uniform continuity of $f$ implies that $f_\epsilon$ is continuous. 
\end{proof}

The next two results concern functions $F$
that are {\em a priori} only defined on $\Lambda$ and therefore may be regarded as 
$0$-cochains on the tiling $\T_\Lambda$. The coboundary $\delta F$ of
$F$ is then a $1$-cochain on $\T_\Lambda$ that computes the change in $F$ along an arbitrary chain.

\begin{prop} \label{prop-2}
Suppose that $\Lambda$ is FLC and repetitive. 
Let $F:\Lambda\to\R$ be a bounded function such that the 
$1$-cochain $\delta F$ on $\T_\Lambda$ is \sP. 
Then for all $\epsilon$ there exists a relatively dense set $\Lambda_\epsilon$
which is locally derivable from $\Lambda$ such that
$\forall p,q\in \Lambda_\epsilon$: $|F(p)-F(q)|<\epsilon$.
\end{prop}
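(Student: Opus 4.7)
The natural candidate for $\Lambda_\epsilon$ is a single strongly pattern equivariant class
\[
\Lambda_P^S := \{p \in \Lambda : (\Lambda - p) \cap B_S(0) = P\}
\]
for appropriate scale $S$ and $S$-patch $P$. Such a set is locally derivable (with derivation radius $S$) by construction and relatively dense by repetitivity, so the whole content of the proposition is to exhibit $S$ and $P$ with $\mathrm{diam}\, F(\Lambda_P^S) < \epsilon$. The key technical input is \emph{pattern integration}: if $p, q \in \Lambda$ share the same $S$-patch and $R < S$ is the SPE radius of $\delta F$, then path-independence of $\delta F$-integrals (as a coboundary of $F$) yields $F(p+y) - F(p) = F(q+y) - F(q)$ for every $y$ in the common $(S-R)$-patch.

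I would reformulate the question using the canonical transversal $\Omega^*_\Lambda := \{\omega \in \Omega_\Lambda : 0 \in \omega\}$. The map $\Lambda - q \mapsto F(q)$ defines $\tilde F$ on the transversal orbit $\{\Lambda - q : q \in \Lambda\}$, which is dense in $\Omega^*_\Lambda$ by repetitivity. Consider the oscillation
\[
\mathrm{osc}(\omega) := \lim_{S \to \infty} \mathrm{diam}\, F\bigl(\{q \in \Lambda : (\Lambda - q) \cap B_S(0) = \omega \cap B_S(0)\}\bigr);
\]
the limit exists by monotonicity, is bounded by $\mathrm{diam}\, F(\Lambda)$, and is upper semi-continuous on $\Omega^*_\Lambda$ as the decreasing limit of functions constant on the clopen $S$-patch cylinders. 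If $\inf_{\Omega^*_\Lambda} \mathrm{osc} < \epsilon$, then $\{\mathrm{osc} < \epsilon\}$ is open and non-empty, so it meets the dense transversal orbit, producing some $\Lambda - q_0$ with $\mathrm{diam}\, F(\Lambda_{P(q_0)}^S) < \epsilon$ for sufficiently large $S$.

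It remains to exclude the case $\mathrm{osc} \geq c > 0$ everywhere on $\Omega^*_\Lambda$, and this is where the real work lies. The tool is an arithmetic iteration: if $\omega^* \in \Omega^*_\Lambda$ realizes oscillation $c$, pick orbit sequences $\Lambda - q_n, \Lambda - q_n' \to \omega^*$ with $F(q_n) - F(q_n') \to c$; for $n$ large, $q_n$ and $q_n'$ share arbitrarily large common $S$-patches, so setting $\tau_n := q_n - q_n'$ the pattern integration gives inductively $q_n' + k \tau_n \in \Lambda$ and
\[
F(q_n' + k \tau_n) = F(q_n') + k\,(F(q_n) - F(q_n'))
\]
for every $0 \leq k \leq \lfloor S / |\tau_n| \rfloor$. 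Boundedness of $F$ then forces $k \leq 2 \|F\|_\infty / c$ and hence $|\tau_n| \geq S c / (2 \|F\|_\infty + c)$, which tends to infinity with $S$. The main obstacle is that this lower bound is \emph{a priori} consistent with linear repetitivity of $\Lambda$ and does not immediately contradict anything; to close the argument I would pass to a compactness limit in the hull: the translates $\omega^* - \tau_n$ accumulate at $\omega^*$, and combined with finite local complexity (only finitely many patch-types in any bounded region) and the arithmetic progression of $F$-values along the iterated chains, this should force either an infinite chain along which $F$ grows without bound (contradicting boundedness of $F$) or a non-trivial translational symmetry of $\omega^*$ incompatible with the FLC repetitive hypothesis. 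This last compactness-combinatorial step is the crux of the proof.
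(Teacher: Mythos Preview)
Your framework---reducing to a single patch class $\Lambda_P^S$, introducing the upper-semicontinuous oscillation function on the transversal, and aiming for $\inf\mathrm{osc}<\epsilon$---is sound, and your pattern-integration identity $F(p+y)-F(p)=F(q+y)-F(q)$ for points sharing a large patch is exactly the mechanism the paper exploits. But the contradiction argument you launch from ``$\mathrm{osc}\ge c$ everywhere'' has a genuine gap. Your arithmetic-progression step correctly yields $|\tau_n|\gtrsim Sc/\|F\|_\infty$, but as you yourself note, this is compatible with everything: two points sharing an $S$-patch can be arbitrarily far apart. The proposed escape routes do not work. The translates $\omega^*-\tau_n$ with $|\tau_n|\to\infty$ accumulate \emph{somewhere} in the hull by compactness, but there is no reason they accumulate at $\omega^*$, so no periodicity emerges; and even if it did, a non-trivial period of $\omega^*$ is not forbidden by FLC plus repetitivity (fully periodic point sets satisfy both). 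The ``infinite chain with unbounded $F$'' option is equally blocked: you already bounded the chain length by $2\|F\|_\infty/c$. So the final step, which you correctly identify as the crux, is not merely unfinished but headed in a direction that does not close.

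The paper's proof bypasses the contradiction entirely and is much shorter. Rather than analysing an abstract $\omega^*$, it \emph{directly} exhibits a point where the oscillation is small: choose $a,b\in\Lambda$ with $F(b)-F(a)$ within $\epsilon$ of $2M:=\sup F-\inf F$, take a $1$-chain $c$ from $a$ to $b$, and let $\tilde c$ be the patch of radius $R$ (the SPE radius of $\delta F$) around $c$. For every translate $t$ with $\tilde c+t\subset\Lambda$, pattern integration gives $F(b+t)-F(a+t)=F(b)-F(a)\ge 2M-\epsilon$; since $F(a+t)\ge\inf F$, this forces $F(b+t)\ge\sup F-\epsilon$. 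Thus $\Lambda_\epsilon:=\{b+t:\tilde c+t\subset\Lambda\}$ has $F$-oscillation below $\epsilon$, is locally derivable, and is relatively dense by repetitivity. In your language: a near-maximizer $b$ of $F$ automatically has $\mathrm{osc}(\Lambda-b)<\epsilon$, because any point sharing a sufficiently large patch with $b$ is pinned near the top of the range by the constraint at the paired $a$-copy. This one observation replaces your entire contradiction machinery.
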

\begin{proof} Let $2M= \sup_{a,b\in\Lambda} (F(b)-F(a))$. Then $F$ takes values in
  $[f_0-M,f_0+M]$ for some $f_0$. Let $f = F-f_0$.  Given $\epsilon>0$ there are
  $a,b$ such that $2M-\epsilon\leq f(b)-f(a)\leq 2M$. Hence
  $M-\epsilon\leq f(b)\leq M$ and $-M\leq f(a)\leq -M+\epsilon$. Let
  $c$ be a chain with boundary $b-a$. Since $\delta F$ is strongly
  pattern equivariant, $c$ is contained in a patch $\tilde c$ 
  such that the value $\delta F(c)$ depends only on $\tilde c$; in
  other words, whenever $\tilde c + t$ occurs in the complex then
  $\delta F(c+t) = \delta F(c)$. Let $P$ be the set of $t$ such that
  $\tilde c + t$ occurs in the complex.  $P$ is locally derived from $\Lambda$,
  and by repetitivity $P$ is
  relatively dense. Taking $\Lambda_\epsilon=P+b$ and $p\in\Lambda_\epsilon$, 
  we have $f(p)=f(p-b+a)+ \delta F(c) \ge -M + (2M-\epsilon) = M-\epsilon$, 
  hence $|f(p)-f(q)|<\epsilon$ for all $p,q \in \Lambda_\epsilon$. $\Lambda_\epsilon$
  is locally derived from $\Lambda$ and relatively dense.  
  \end{proof}
\begin{cor} \label{cor-exact} 
Under the conditions of Prop.~\ref{prop-2} $F$ is \wP.
\end{cor}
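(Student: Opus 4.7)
The plan is to use the nearly-constant set $\Lambda_\epsilon$ furnished by Proposition~\ref{prop-2} as an ``anchor'': on $\Lambda_\epsilon$ the function $F$ varies by less than $\epsilon$, and elsewhere $F$ is reconstructed by integrating the \sP\ cochain $\delta F$ along a short chain back to $\Lambda_\epsilon$. Because the chain can be kept short, this integral is itself \sP, and replacing the true value $F(q)$ at the anchor by a single fixed constant introduces an error of at most $\epsilon$.

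Concretely, fix $\epsilon>0$ and apply Prop.~\ref{prop-2} to obtain a relatively dense $\Lambda_\epsilon$, locally derivable from $\Lambda$, on which the oscillation of $F$ is less than $\epsilon$. Let $R'$ be a radius of relative density for $\Lambda_\epsilon$ and $R$ a radius of strong pattern equivariance for $\delta F$. For each $p\in\Lambda$, assign a point $q(p)\in\Lambda_\epsilon$ with $|p-q(p)|\le R'$, choosing $q(p)$ by a deterministic tie-breaking rule based on the pattern of $\Lambda$ around $p$ (FLC makes only finitely many local configurations possible, so such a rule exists). Likewise, select a $1$-chain $\gamma(p)$ in $\ttt_\Lambda$ from $q(p)$ to $p$ by a local rule; its support lies in a bounded ball around $p$. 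Fix any reference point $p_0\in\Lambda_\epsilon$ and define the $0$-cochain
\[
G(p)\;=\;F(p_0)\,+\,\delta F(\gamma(p)).
\]

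The function $p\mapsto \delta F(\gamma(p))$ depends only on the pattern of $\Lambda$ within distance $R+R'$ of $p$, so $G$ is \sP. Using $\delta F(\gamma(p))=F(p)-F(q(p))$, one computes
\[
F(p)-G(p)\;=\;F(q(p))-F(p_0),
\]
and since $q(p), p_0\in\Lambda_\epsilon$ this difference has absolute value $<\epsilon$. Hence $\sup_{p\in\Lambda}|F(p)-G(p)|<\epsilon$, and letting $\epsilon\to 0$ exhibits $F$ as a uniform limit of \sP\ $0$-cochains, i.e.\ $F$ is \wP.

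The one delicate point, and the place where the argument could stumble, is ensuring that the two choices $q(p)$ and $\gamma(p)$ really are \sP\ and not merely ``locally made''. This is handled by the combination of FLC (which gives only finitely many relevant local patches up to translation) and the local derivability of $\Lambda_\epsilon$ from $\Lambda$; together they allow us to fix canonical tie-breaking conventions once and for all, so that the composite map $p\mapsto G(p)$ factors through a fixed approximant $\Gamma_{R+R'}$.
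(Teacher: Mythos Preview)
Your proof is correct and follows essentially the same approach as the paper. The paper makes the ``nearest anchor'' assignment explicit by using Voronoi cells of $\Lambda_\epsilon$ (with a directional tie-break on boundaries) and takes the constant to be $\inf\{F(b):b\in\Lambda_\epsilon\}$ rather than $F(p_0)$, but the structure---replace $F$ by a constant on $\Lambda_\epsilon$ and add back the \sP\ integral $\delta F$ along a locally chosen chain to the anchor---is identical.
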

Note that boundedness of $F$ is also a necessary criterion since any \wP\ cochain is bounded. 
\begin{proof}
  Applying Prop.~\ref{prop-2} to $F$ we find that given any
  $\epsilon$, there exists a relatively dense set $\Lambda_\epsilon$
  that  is locally derivable from $\Lambda$ such that $\forall p,q\in
  \Lambda_\epsilon$: $|F(p)-F(q)|<\epsilon$.  We partition $\Lambda$ into
  subsets $\{V_a\}_a$ for $a\in\Lambda_\epsilon$ as follows. Let
  $\vvv(a)$ be the Voronoi domain of $a\in\Lambda_\epsilon$. In a
  generic situation, no point of $\Lambda$ lies on the boundary of
  some $\vvv(a)$ and then we can define $V_a = \vvv(a)\cap\Lambda$. In
  a non-generic situation we can make choices to associate the points
  on a boundary to one of the Voronoi domains in a locally derivable
  way (e.g.\ by a directional criterion). As a result, we have
  obtained a partition of $\Lambda$ that  is locally derivable from
  $\Lambda$. Now define
$$F_\epsilon(a) := \inf\{F(b):b\in\Lambda_\epsilon\}$$
for $a\in \Lambda_\epsilon$ and 
$$ F_\epsilon(x) = F_\epsilon(a) + \delta F(c) $$
for $x\in V_a$
where $c$ is any 1-chain with boundary $x-a$. Since $\delta F$ is \sP\ and the Voronoi domains are bounded we obtain that $F_\epsilon$ is \sP. Furthermore,
also $ F(x) = F(a) + \delta F(c) $ so that $|F(x) - F_\epsilon(x)|\leq \epsilon$.
\end{proof}

\section{Topological conjugacies}
\subsection{Shape deformations and shape conjugacies} 
 Consider two FLC polyhedral tilings $\T$ and $\T'$.  Each tiling is
 determined by its edges and by the location of a single vertex. Deforming the edges deforms the shapes
 and sizes of the tiles.  $\T'$ is a (shape) deformation of $\T$ if
 $\T$ and $\T'$ have the same combinatorics, and if the vectors that
 give the displacements along the edges of $\T'$ are obtained in a
 local way from the corresponding edges of $\T$. That is, there exists
 $R>0$ such that the displacement along edge $e'$ of $\T'$,
 corresponding to edge $e$ of $\T$, depends only on $B_R(e)\cap
 \T$. This does not mean that inspection of $B_R(e)\cap \T$ allows one
 to determine the location of $e'$, only the relative position of the
 two endpoints. 
 We can encode this by a vector-valued $1$-form $\alpha$ on $\T$, namely,
 $\alpha(e) = v_{e'} $ where $v_{e'}$ is the displacement vector along edge $e'$. 
 The requirement that $v_{e'}$ depends only on $B_R(e)\cap \T$ means that $\alpha$ is \sP.
Note that shape deformations automatically preserve finite local complexity.

There are two canonical ways to turn a shape deformation between two specific tilings $\T$, $\T'$
into a continuous surjection $\Omega_\T\to\Omega_{\T'}$.
The first method preserves canonical transversality and applies to all shape deformations \cite{SW,K2}. For simplicity we suppose that $\T$ has a vertex $x_0$ on the origin $0\in\R^d$.
Consider the vector-valued $0$-cochain $\tilde F$ satisfying $\delta \tilde F = \alpha$ and $\tilde F(x_0)=0$.
Let $h: \R^d \to  \R^d$ be a piecewise-linear 
extension\footnote{Identifying the vertices of $\T$ with a subset of $\R^d$ we view $\tilde F$ as a map on this subset.}
of $\tilde F$ to $\R^d$, and let $\phi_1(\T-x) = \T' - h(x)$.  
This map is uniformly continuous on the orbit of $\T$, and so extends to a continuous surjection 
$\phi_1:\Omega_\T\to\Omega_{\T'}$. Note that $\phi_1$ does not commute with translations, precisely because the distances between vertices in $\T'$ are different from the distances between vertices in $\T$. 

The second method only applies to very special shape deformations (namely those that are called asymptotically negligible in \cite{CS2,K2}).
Let $F=\tilde F - \mbox{id}$. $F$ is the vector-valued 0-cochain on $\T$ whose value at an arbitrary vertex $x \in \T$ is $x'-x$, where $x'$ is the corresponding vertex of $\T'$. 
If $F$ is \wP, then $F$ extends to a continuous map on $\Omega_\T$ implying that
$\phi_2(\T-x) = \T'-x$ is also uniformly continuous. $\phi_2$ thus extends to a continuous surjection
$\phi_2: \Omega_\T \to\Omega_{\T'}$. In fact, if $\T_1 \in \Omega_\T$, 
then $\phi_2(\T_1)$ is the tiling obtained 
by moving each vertex $x$ of $\T_1$ by $F(x)$. By construction, this map commutes with translations and hence is a topological semi-conjugacy.
We call this a {\em shape semi-conjugacy}, and if $\phi_2$ is invertible a {\em shape conjugacy}.  If $F$ is \sP, then the shape semi-conjugacy is a local derivation. Local derivations preserve the
Meyer property, but we will see that general shape deformations do not. 

The $0$-cochain $F$ is determined by the tilings $\T$ and $\T'$, but we can equally well construct shape semi-conjugacies directly from cochains. 
Let $F$ be any \wP\ vector-valued 0-cochain taking values that are less than half the separation of vertices in $\T$, such that $\delta F$ is \sP. Then we can construct $\T'$ from $\T$ by moving each vertex $x$ of $\T$ by $F(x)$ and preserving the combinatorics. It follows from the above that
there is a semi-conjugacy $\phi_2$ from $\Omega_\T$ to $\Omega_{\T'}$. If $F$ is small enough then this procedure can be inverted \cite{K2} so that $\phi_2$ is a shape conjugacy.

The following theorem implies Theorem~\ref{main3}, showing that shape conjugacies are essentially the
only topological conjugacies that are not mutual local derivations. 

\begin{thm}\label{thm-shape}
 Let $\Lambda$ and $\Lambda'$ be FLC Delone sets which are pointed topologically conjugate, i.e.\ there exists a topological conjugacy $\phi: \Omega_\Lambda \to \Omega_{\Lambda'}$ with $\phi(\Lambda) = \Lambda'$.  For each
  $\epsilon >0$ there exists a mutual local derivation $\psi_\epsilon$ and a
  shape conjugacy $s_\epsilon$ such that $\phi = s_\epsilon \circ
  \psi_\epsilon$  and $s_\epsilon$ moves the location of
  each point in each pattern by less than $\epsilon$.
\end{thm}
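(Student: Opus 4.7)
My plan is to construct an intermediate Delone set $\Lambda_\epsilon$ that is MLD to $\Lambda$ and differs from $\Lambda'$ only by a small combinatorially-trivial shape deformation. The maps $\psi_\epsilon:\Omega_\Lambda\to\Omega_{\Lambda_\epsilon}$ (the MLD) and $s_\epsilon:\Omega_{\Lambda_\epsilon}\to\Omega_{\Lambda'}$ (the shape conjugacy) then compose to $\phi$. The guiding idea is that although $\phi$ is not local, uniform continuity on the compact hull makes it almost local: the pattern of $\phi(\omega)$ in any bounded region is determined by the pattern of $\omega$ in a larger region, up to arbitrarily small perturbations of vertex positions.

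The first step is this almost-locality statement for $\phi$. By compactness of $\Omega_\Lambda$ and continuity of $\phi$, for any $\eta>0$ there exists $R>0$ such that whenever $\omega_1,\omega_2\in\Omega_\Lambda$ agree on $B_R(0)$, the tilings $\phi(\omega_1)$ and $\phi(\omega_2)$ have the same combinatorics in $B_1(0)$ with corresponding vertices within $\eta$ of each other. I would fix $\eta<\epsilon$ small enough that the combinatorial matching between $\Lambda'$ and its $\eta$-perturbations is unambiguous.

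Next, for each of the finitely many pointed $R$-patch types $P$ of $\Lambda$, fix a representative $\omega_P\in\Omega_\Lambda$ with $0\in\omega_P$ and $\omega_P\cap B_R(0)=P$. Let $v_P\in\R^d$ be the position of a canonically chosen vertex of $\phi(\omega_P)$ near $0$ (say the nearest, with ties broken lexicographically). Define
$$\Lambda_\epsilon:=\{x+v_{P_x}\,:\,x\in\Lambda\},$$
where $P_x$ is the $R$-patch of $\Lambda$ at $x$. By construction $\Lambda_\epsilon$ is locally derivable from $\Lambda$, and applying the same construction to $\phi^{-1}$ yields local derivability of $\Lambda$ from $\Lambda_\epsilon$, so $\psi_\epsilon$ is an MLD. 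The almost-locality estimate provides, for each $x\in\Lambda$, a unique point $z_x\in\Lambda'$ within $\eta$ of $x+v_{P_x}$, namely the vertex of $\phi(\Lambda-x)=\Lambda'-x$ matching $v_P$ in $\phi(\omega_P)$; the map $x+v_{P_x}\mapsto z_x$ is then a bijection from $\Lambda_\epsilon$ to $\Lambda'$.

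The displacement cochain $F(x+v_{P_x}):=z_x-(x+v_{P_x})$ is \sP\ (since $z_x$ is determined by the $R$-patch of $\Lambda$ at $x$), is bounded by $\eta<\epsilon$, and its coboundary is \sP\ as well. For $\eta$ small enough, the shape-deformation construction developed earlier in this section is invertible and yields the desired shape conjugacy $s_\epsilon:\Omega_{\Lambda_\epsilon}\to\Omega_{\Lambda'}$, moving each point by less than $\epsilon$. The equality $s_\epsilon\circ\psi_\epsilon=\phi$ holds on the orbit of $\Lambda$ by construction and extends to $\Omega_\Lambda$ by continuity and density of the orbit. The principal obstacle is to verify simultaneously that (i) $\Lambda_\epsilon$ is a genuine Delone set whose Delone complex agrees with that of $\Lambda'$, so the deformation is combinatorially trivial, (ii) the matching $x+v_{P_x}\mapsto z_x$ is a bijection, and (iii) the resulting shape deformation is locally invertible. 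All three conditions follow from FLC and uniform continuity, but they must be arranged simultaneously by tuning the quantifiers $(R,\eta)$ in the right order.
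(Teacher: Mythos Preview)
Your overall strategy matches the paper's, but there is a genuine gap at a crucial point: the claim that the displacement cochain $F$ is \sP\ is false. You write that ``$z_x$ is determined by the $R$-patch of $\Lambda$ at $x$,'' but uniform continuity only tells you that if $x,y\in\Lambda$ share the same $R$-patch then $\Lambda'-x$ and $\Lambda'-y$ agree near the origin \emph{up to a translation of size at most $\eta$}. Hence $z_x-x$ and $z_y-y$ can differ by as much as $2\eta$, and $F$ is in general only \wP, not \sP. This is not a technicality: if $F$ were genuinely \sP\ then (as noted just before the theorem) the shape semi-conjugacy would itself be a local derivation, so $\phi$ would factor as a composition of two MLDs and hence be an MLD. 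That is exactly what the theorem is \emph{not} asserting, and the counterexample in Section~\ref{Counter-ex} shows it is false in general.

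The paper repairs this as follows. The overall small translation that spoils strong pattern equivariance of $F$ cancels when you take differences: if $x,y$ lie in a common large patch, then $z_y-z_x$ is a difference of two points of $\Lambda'-x$ and is therefore invariant under the ambiguous translation. Thus $\delta F$ \emph{is} \sP. Since $F$ is bounded (by $\eta$), Corollary~\ref{cor-exact} (which in turn rests on Proposition~\ref{prop-2} and hence on repetitivity) yields that $F$ is \wP, which is precisely what is needed for $s_\epsilon$ to be a shape conjugacy rather than a local derivation. A smaller issue: your construction of $\Lambda_\epsilon$ starts from points of $\Lambda$ and then tries to match them to points of $\Lambda'$, leaving surjectivity of $x+v_{P_x}\mapsto z_x$ unestablished; the paper sidesteps this by building $\Lambda_\epsilon$ from the points of $\Lambda'$ (via representative $R$-patches of $\Lambda$ around them), so the bijection with $\Lambda'$ is automatic, and then argues that the resulting local derivation $\psi_\epsilon$ is injective, hence an MLD.
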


\begin{proof} 
Let $R'$ be the minimum distance between distinct points of $\Lambda'$, and
pick $\epsilon < R'/3$. Since $\phi$ is uniformly continuous, there exists
a $\delta$ such that any two tilings within distance $\delta$  
map to tilings within $\epsilon$ of one another. Here we use the common distance between patterns: $\Lambda_1$ and $\Lambda_2$ have distance at most 
$\delta$ if their patterns in a ball of radius $1/\delta$ agree up to a translation of size at most $\delta$. Thus by uniform continuity
of $\phi$ there exists an $R = 1/\delta$ such that the locations of the points in 
  $\Lambda'$ in a ball of radius $1/\epsilon$ around $x\in\R^d$
are determined to within $\epsilon$ by the pattern of
$\Lambda$  in a ball of radius $R$ around those points.
In particular, 
each point in $\Lambda'$ (say at location $x_0$) 
is associated with a pattern of radius $R$ around $x_0$ in
$\Lambda$. 
Translate these patterns by $-x_0$ to yield a set $S$ of point patterns 
on the ball of radius $R$ around the origin. Note that if we have 
two such point patterns $P_{1,2}$ with $P_1=P_2 - y$, 
then either $|y|<\epsilon$ or $|y|>R'-\epsilon$. (The latter can
happen if $P_1$ determines the existence of several points of $\Lambda'$.) 
Since $R'-\epsilon$ is strictly greater than $2\epsilon$, there is 
an equivalence relation on $S$ that two patterns are equivalent if they
are translates by less than $\epsilon$.

Since $\Lambda$ is FLC, there are only finitely many equivalence classes in $S$. 
Pick representatives $\{P_1, \ldots, P_n\}$ of the equivalence classes. 
Define a point pattern $\Lambda_\epsilon$ in the following way:
$\Lambda_\epsilon$ has a point at $x$ if and only if one of the patterns $P_i+x$ appears
in $\Lambda$. By construction $\Lambda_\epsilon$ is locally derivable from $\Lambda$.
Moreover, the points of $\Lambda_\epsilon$ are in 1-1 correspondence
with the points of $\Lambda'$, and each point in $\Lambda_\epsilon$
is within $\epsilon$ of the corresponding point of $\Lambda'$.
Let $\psi_\epsilon:\Omega_\Lambda\to \Omega_{\Lambda_\epsilon}$ be the local derivation
defined by the above procedure.

We show that $\Lambda_\epsilon$ is actually MLD with $\Lambda$ and $\psi_\epsilon$ is a mutual local derivation. In fact, 
since $\phi$ is a topological conjugacy, the pattern of $\Lambda$ on
a ball of radius $R$ is determined, up to translation by $\epsilon$, by 
the pattern of $\Lambda'$ on a bigger ball, say of radius $R''$. 
This means that the pattern $P_i+x$ in $\Lambda$ that generated a 
point $x \in \Lambda_\epsilon$ can be determined from a ball of radius
$R''+\epsilon$ around $x$ in $\Lambda'$. Thus $\psi_\epsilon$ is injective.
Since $\psi_\epsilon$ is an injective 
local derivation, $\psi_\epsilon$ is a mutual local derivation. 

We claim that the map $s_\epsilon = \phi \circ \psi_\epsilon^{-1}$ is
a shape conjucagy. 
Let $F$ be the 0-cochain
on $\T_{\Lambda_\epsilon}$ whose value on each
point $x\in \Lambda_\epsilon$ is the displacement to the corresponding point in $\Lambda'$. 
By the above, $\Lambda_\epsilon$ and $\Lambda$ are at most distance $\epsilon$ apart. So if $\epsilon$ is small enough then $\T_{\Lambda_\epsilon}$ and $\T_{\Lambda'}$ have the same combinatorics.\footnote{Except in the situation that $\Lambda'$ has accidentally high symmetries, causing the Voronoi cells of $\Lambda'$ to meet in non-generic ways. In such cases a 
small perturbation of $\Lambda'$ can change the combinatorics of the Voronoi tiling, and hence of the Delone tiling. Such cases can be dealt with by working with a (subsequential) 
limit tiling $\lim_{\epsilon\to 0} \T_{\Lambda_\epsilon}$, whose vertices are still $\Lambda'$,  instead of with $\T_{\Lambda'}$.} 
We show that the coboundary $\delta F$ is  \sP: Since $\Lambda_\epsilon$ and $\Lambda$ are MLD, a pattern in a quite large ball in  $\Lambda_\epsilon$
determines the pattern in a large ball in $\Lambda$. We also saw that uniform continuity of $\phi$
implies that a  large ball in $\Lambda$ determines the pattern in $\Lambda'$ up to a small
overall translation. But overall translations don't matter for coboundaries and so the very large ball in  $\Lambda_\epsilon$ determines $\delta F$ exactly.

Since $F$ is bounded and $\delta F$ is \sP, it
follows from Cor.~\ref{cor-exact} that $F$ is weakly pattern
equivariant, and hence that $s_\epsilon$ is a shape conjugacy. 
\end{proof}

In view of the preceding discussion we can also formulate the last theorem in the following way:
Two FLC Delone sets $\Lambda$ and $\Lambda'$ are pointed topologically conjugate if and only if
for each $\epsilon >0$ there exists a FLC Delone set $\Lambda_\epsilon$ such that
  \begin{enumerate}
  \item $\Lambda$ and $\Lambda_\epsilon$ are mutually locally derivable,
  \item  $\Lambda'$ and $\Lambda_\epsilon$ are mutually asymptotically negligible shape deformations,
  \item within $\epsilon$-distance of each point of $\Lambda'$ lies a point of $\Lambda_\epsilon$ and vice versa.
  \end{enumerate}

\section{Cantor bundles over tori with canonical $\R^d$-action}
\newcommand{\F}{\mathcal F}

It has been known for quite some time that the hull of any FLC tiling is
homeomorphic to a fiber bundle over the $d$-torus whose typical fiber
is a compact totally disconnected space such as a Cantor set \cite{SW}. 
This means
that there is a continuous surjection $X\stackrel{\pi}{\to} \TM^d$  
onto the $d$-torus such that the pre-images $\pi^{-1}(t)$,
$t\in\TM^d$, are all homeomorphic to a single compact totally
disconnected space $\F$, the so-called typical fiber, and that $X$ is
locally a product i.e.\ every point has a neighbourhood $U$ such that
$\pi^{-1}(U)$ is homeomorphic to $\F\times U$. 
However, the
construction of \cite{SW} is based on deforming the tiling to a tiling by
cubes. It yields a  homeomorphism but in general not a topological
conjugacy.  

Let $X\stackrel{\pi}{\to} \TM^d$ 
be any fiber bundle over the $d$-torus. We say that a (continuous) action of
$\R^d$ on the bundle $X$ is {\em canonical} if there exists a regular
lattice $L\subset\R^d$ such that $\pi$ becomes $\R^d$-equivariant
(hence a factor map) when we identify  $\TM^d=\R^d/L$ and equip it
with the action induced by translation on  
$\R^d$ (called a rotation action). 
Note that if the fiber of the bundle is totally disconnected then, 
once we have fixed
the lattice $L$, the canonical action becomes unique.

Any fiber bundle over a $d$-torus with canonical $\R^d$-action has $d$
independent topological eigenvalues. Indeed, the pull-back under $\pi$
of any continuous eigenfunction of  
the rotation action on $\TM^d$ is a continuous eigenfunction of
$(X,\R^d)$. 
In particular the group of eigenvalues of $(\TM^d,\R^d)$ (which is a
lattice in ${\R^d}^*$) is a subgroup of the group of
eigenvalues of $(X,\R^d)$. 

In the above context of a fiber bundle over a $d$-torus with canonical $\R^d$-action
we say that the action is {\em expansive}
if the induced action on a fiber is expansive, that is,
there exists a constant $\epsilon>0$
such that for any $x,y\in \pi^{-1}([0])$, $\sup_{t\in L} d(t\cdot
x,t\cdot y)\leq \epsilon$ implies $x=y$. The largest such constant is called
the expansivity constant. Here $d$ is a metric that induces the
topology. Whereas the expansivity constant will depend on the choice
of $d$, the mere fact that the $\R^d$-action is expansive does not.
There do exist Cantor bundles, such as the dyadic solenoid over the circle, 
with non-expansive canonical dynamics, but these are not homeomorphic to
FLC tiling spaces.

If the $\R^d$ action on $X$ is topologically
transitive and $\pi:X\to\TM^d$ is a factor map, then the action by rotation
on the torus must be 
transitive as well. For dimensional reasons the action by rotation
must then also be locally free. 
Since the action of $\R^d$ on $X$ is then locally free as well, the fibers 
$\pi^{-1}([t])$ must be transversal to it. Furthermore
the action of $t\in\R^d$ provides a homeomorphism between
$\pi^{-1}([s])$ and  $\pi^{-1}([s-t])$ and so all fibers are
homeomorphic. Letting $\Xi$ denote the fiber on $[0]$ we see that for 
small enough open balls $U\in \TM^d$, $\Xi\times U\ni
(\xi,[t])\mapsto t\cdot \xi \in \pi^{-1}(U)$ (we lift the ball $U$ to a
ball in $\R^d$) is a homeomorphism providing a local trivialization
for a bundle structure defined by $\pi$. 
Thus, once we have established that $\pi$ is a factor map onto a
rotation action,  
the only issue will be to show that some pre-image has the topological
characterization we want and that the dynamics is expansive. 
We will do this in the next section.

\section{Proof of Theorems \ref{main} and \ref{main2}}

We prove the following three statements in turn. 
Taken together, they
imply both Theorem~\ref{main} and Theorem~\ref{main2}.

\begin{enumerate}
\item{A repetitive Meyer set in $\R^d$ 
has a relatively dense set of topological eigenvalues.
In particular, it has $d$ linearly independent topological eigenvalues.}
\item{If an FLC Delone set has $d$ linearly independent topological
eigenvalues, then it is topologically conjugate to a bundle
over the $d$ torus with totally disconnected compact fiber and 
expansive canonical $\R^d$ action. 
If the Delone set is repetitive, then the $\R^d$ action is minimal.}
\item{A bundle with totally disconnected compact fiber 
over the $d$ torus with minimal, expansive
    canonical $\R^d$ action is topologically conjugate to the dynamical system of a repetitive
    Meyer set.}\end{enumerate}

\begin{proof}[Proof of Statement 1] 
Let $\Lambda$ be a Meyer set and pick $0< \epsilon < 1$ and $a_0\in
\Lambda$. 
Using Lemma~\ref{prop-EV} we will establish that each
  $\beta\in\Delta^\epsilon$ is a continuous eigenvalue by showing that  
  $f(x) := \exp(2 \pi i \beta(x-a_0))$ is \wP. Since  $\Delta^\epsilon$
  is relatively dense this then proves Statement~1.

  Since $|\exp(2 \pi i \beta(a-a_0)) -1 |<\epsilon<1$ for all
  $a\in\Lambda$, the real part of $f(a)$ is positive for all
  $a\in\Lambda$.  There is then a function $\theta: \Lambda \to
  (-\pi/2, \pi/2)$ such that $f(a) = e^{i\theta(a)}$ for all
  $a\in\Lambda$.  Consider the $1$-cochain $\alpha=\delta \theta$ on
  $\T_\Lambda$. It
  satisfies $-\pi < \alpha(c) < \pi$ for each edge $c$. We claim that
  $\alpha = \delta \theta$ is \sP.  Since $\exp(i \alpha)(c) = e^{i
    \theta(b)-i\theta(a)} = \exp(2\pi i \beta(b-a)),$ where $a,b$ are the boundary
  vertices of $c$, we see that $\exp(i \alpha)$ is determined by the
  length and the direction of $c$ and is thus \sP.  But since $-\pi <
  \alpha(c) < \pi$, $\alpha$ is itself \sP.  Hence $\delta \theta$
  satisfies the conditions of Prop.~\ref{prop-2}. Hence for all
  $\eta>0$ there exists $\Lambda_\eta$, locally derivable from
  $\Lambda$, such that $\forall a,b\in\Lambda_\eta$ we have
  $|\theta(a)-\theta(b)|<\eta$. Since $f(a) = e^{i\theta(a)}$ we
  conclude that for all $\eta>0$ there exists $\Lambda_\eta$, locally
  derivable from $\Lambda$, such that $\forall a,b\in\Lambda_\eta$ we
  have $|f(a)-f(b)|<\eta$.

  We claim that $f$ satisfies the conditions of Lemma~\ref{lem-wP},
  which then proves Statement~1. Clearly $f$ is uniformly continuous.
  $\Lambda_\eta$ being locally derivable from $\Lambda$ means that
  there exists a finite set $\{p_1,\cdots,p_k\}$ of patches $p_i$ such that
$\Lambda_\eta = \{x\in\Lambda:\exists i:p_i\subset \Lambda-x\} $.
Let $R$ be large enough so that each $R$-ball
  contains some patch $p_i$. Then, given $x$ there are $i$ and $t\in
  B_{R}(0)$ such that $p_i\subset (\Lambda-x-t)\cap B_R(0)$. In
  particular this means that $x+t\in\Lambda_\eta$.  Hence if
  $(\Lambda-x)\cap B_R(0) = (\Lambda-y)\cap B_R(0)$ then $|f(x)-f(y)|
  = |f(x+t)-f(y+t)|<\eta$ as $x+t$ and $y+t$ belong to
  $\Lambda_\eta$. \end{proof}

Note that we have proved considerably more than just the relative density
of the topological eigenvalues. The topological eigenvalues form a 
group, so the set of topological eigenvalues contains the 
group generated by $\Delta^\epsilon$. In many (but not all) cases,
this group is the same as the set of measurable eigenvalues. In these
cases, all eigenvalues are topological!  This gives a new perspective on 
why the eigenfunctions for substitution tilings and for model sets can 
always be chosen continuous.

\begin{proof}[Proof of statement 2] The $d$ topological eigenvalues
  $\beta_i$ generate a lattice $L^* \subset \R^{d*}$. Let $L \subset
  \R^d$ be the dual (or reciprocal) lattice to $L^*$. The $d$
  eigenfunctions are periodic with period $L$, and their values give a
  map $\pi(\omega) = (\beta_1(\omega),\cdots,\beta_d(\omega))\in
  \R^d/L$ from the hull $\Omega$ to the torus $\TM^d=\R^d/L$. By
  (\ref{eq-EV}) $\pi$ is $\R^d$-equivariant if we consider the
  rotation action on $\TM^d$. The hull is thus a bundle over the
  $d$-torus with canonical $\R^d$-action.

  Let $\Xi = \pi^{-1}([0])$. It is compact as $\Omega$ is compact.  We
  wish to show that it is totally disconnected. For that we only use
  that the hull of an FLC tiling is a matchbox manifold, 
i.e.\ there is a finite open covering $\{\tilde U_i\}_{i\in
    I}$ of $\Omega$ such that $\tilde U_i \cong C_i\times U_i$ where
  $C_i$ are totally disconnected compact sets and $U_i$ open balls of
  $\R^d$ and the $\R^d$-action is given locally by translation in the
  second coordinate: $t\cdot (c,u) = (c,u-t)$ provided $u-t\in
  U_i$. Let $\Xi_i:= \Xi\cap \overline{\tilde U_i}$.  We explained
  above that $\Xi$ is transversal to the action. This means that
  return vectors to $\Xi$ have a length which is bounded from below by
  some strictly positive number $l$. If we choose the sets $U_i$ to be
  of diameter smaller than $l$ then $\Xi_i$ intersects the plaquettes
  $\{c\}\times \overline{U_i}$ at most once. Hence the projection
  $pr_1:C_i\times \overline{U_i}\to C_i$ onto the first factor
  restricts to a continuous bijection between $\Xi_i$ and its image
  $pr_1(\Xi_i)$. Since $\Xi_i$ is compact this bijection is
  bi-continuous.  Since $pr_1(\Xi_i)$ is a compact subset of a totally
  disconnected space it is itself totally disconnected. Thus $\Xi_i$
  is totally disconnected. Since finite unions of totally disconnected
  sets are totally disconnected, $\Xi=\bigcup_i \Xi_i$ is totally
  disconnected.
 
  We next show expansivity. For this we choose the standard tiling metric
  on the hull $\Omega$.  By finite local complexity, if two tilings
  agree exactly at a spot and the nearest neighbor tiles do not agree
  exactly, then the nearest neighbor tiles either differ in label or
  differ in position by at least a fixed quantity $\epsilon$. If two
  tilings $\T_{1,2} \in \Xi$, then either $d(\T_1, \T_2) \ge
  \epsilon/2$ or there is a translate $\T_1'$ of $\T_1$ by less than
  $\epsilon/2$ such that $\T_1'$ and $\T_2$ agree exactly at the
  origin. But then $\T_1'$ and $\T_2$ disagree by at least $\epsilon$
  at the nearest tile where they don't agree exactly, so translates of
  $\T_1'$ and $\T_2$ differ by at least $\epsilon$, so translates of
  $\T_1$ and $\T_2$ disagree by at least $\epsilon/2$. This proves
  that the $\Z^d$ action on $\Xi$ is expansive with expansivity
  constant $\epsilon/2r$, where $r$ is the diameter of a fundamental
  domain of $L$. The last statement is well known.
\end{proof}

It is not difficult to see that if the Delone set is repetitive and does not have $d$ independent
periods,
then the fiber of the bundle has no isolated points and so is a Cantor set. Indeed, by 
minimality $\Xi$ consists either of a single (necessarily finite) orbit, or has no isolated points.
The first case arises precisely if the tiling is totally periodic, with $d$ independent periods. 

We remark that the expansivity of the action could also be concluded
from the work of Benedetti-Gambaudo \cite{BG} who define the concept
of expansivity more generally for tiling dynamical systems on
homogeneous spaces using solenoids.

\begin{proof}[Proof of Statement 3]
Let $\Xi$ be the totally disconnected compact fiber over $[0]\in\TM^d$ (a point we can actually choose) of the bundle $\Omega\to\TM^d$.
The canonical expansive action induces an expansive action of $L\cong\Z^d$ on $\Xi$. 
Let $\epsilon$ be the expansivity radius. Pick a finite 
clopen cover of $\Xi$ by sets of diameter less than $\epsilon$, and label these
sets with a finite alphabet $\aaa = \{1, \ldots, n\}$. To each
element $\xi \in \Xi$ associate an element $u_\xi \in \aaa^{\Z^d}$, where
$u_\xi(t)$ labels which clopen set contains $t\cdot \xi$. The map from $\Xi \to 
\aaa^{\Z^d}$ is injective, so we can view $\Xi$ as a subshift of $\aaa^{\Z^d}$. 
This then makes $\Omega$ topologically conjugate to
a tiling space where each tile is a decorated fundamental domain of the lattice $L$. 
The decoration is given by a letter of the alphabet $\aaa$ but each letter $i$ may as well be
encoded by a finite set $A_i$. Let then $\Lambda$ be the set of vertices of that tiling together with the points of the sets which decorate the tiles. Then $\Lambda$ is MLD with the tiling and furthermore $\Lambda\subset L + \bigcup_{i=1}^n A_i$ showing that it is Meyer.  

Minimality of the bundle implies minimality of the dynamical system of $\Lambda$ and hence its repetitivity.     
\end{proof}

Combining Theorem~\ref{main} with Theorem~\ref{thm-shape} we obtain the following corollary.
\begin{cor}
Let $\Lambda$ be an FLC repetitive Delone set with $d$ independent topological eigenvalues.
For each $\epsilon>0$ there is a Meyer set $\Lambda_\epsilon$ such 
that $\mbox{\rm dist}(\Lambda,\Lambda_\epsilon)\leq \epsilon$, where $\mbox{\rm dist}$ denotes Hausdorff distance between sets of $\R^d$. 
\end{cor}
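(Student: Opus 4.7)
The plan is to combine Theorem~\ref{main} with Theorem~\ref{thm-shape} in a straightforward way. By Theorem~\ref{main}, the hypothesis that $\Lambda$ has $d$ linearly independent topological eigenvalues produces a topological conjugacy $\phi:\Omega_\Lambda\to\Omega_M$ onto the hull of some repetitive Meyer set $M$. Setting $M':=\phi(\Lambda)\in\Omega_M$, the pair $\Lambda$ and $M'$ are pointed topologically conjugate in the sense of Theorem~\ref{thm-shape}.

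Before invoking the shape theorem I would check that $M'$ is itself a Meyer set, so that what comes out of the construction will land in the Meyer class. I would use characterization~(\ref{item-discrete}) of Section~3: since $M-M$ is uniformly discrete with some separation $r>0$, and $M'$ is a limit of translates of $M$ in the tiling topology, every element of $M'-M'$ is the limit of elements of $M-M$; hence $M'-M'$ is also $r$-uniformly discrete. Relative density of $M'$ is immediate from repetitivity. Hence every element of $\Omega_M$ is Meyer, and in particular $M'$ is Meyer.

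Next I would apply Theorem~\ref{thm-shape} to the inverse pointed conjugacy $\phi^{-1}:\Omega_M\to\Omega_\Lambda$, which sends $M'$ to $\Lambda$. For the given $\epsilon>0$ this produces a factorization $\phi^{-1}=s_\epsilon\circ\psi_\epsilon$, in which $\psi_\epsilon:\Omega_M\to\Omega_{\Lambda_\epsilon}$ is a mutual local derivation (so that $\Lambda_\epsilon:=\psi_\epsilon(M')$) and $s_\epsilon:\Omega_{\Lambda_\epsilon}\to\Omega_\Lambda$ is a shape conjugacy satisfying $s_\epsilon(\Lambda_\epsilon)=\Lambda$ and moving the location of each point of each pattern by less than $\epsilon$. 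Because the Meyer property is invariant under mutual local derivations (as recalled in Section~3) and $M'$ is Meyer, $\Lambda_\epsilon$ is a Meyer set. Because $s_\epsilon$ restricts to a bijection between the points of $\Lambda_\epsilon$ and the points of $\Lambda$ that displaces each such point by less than $\epsilon$, the Hausdorff distance satisfies $\mathrm{dist}(\Lambda,\Lambda_\epsilon)\le\epsilon$, which is the content of the corollary.

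There is no substantial obstacle: Theorems~\ref{main} and~\ref{thm-shape} do all the heavy lifting. The only detail that one must verify by hand is the stability of the Meyer property under passage to limits in the hull, which is elementary from characterization~(\ref{item-discrete}).
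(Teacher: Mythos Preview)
Your argument is correct and is exactly the approach the paper has in mind: the paper gives no explicit proof beyond the sentence ``Combining Theorem~\ref{main} with Theorem~\ref{thm-shape} we obtain the following corollary,'' and you have filled in the details properly, including the key observation that one must apply Theorem~\ref{thm-shape} to the \emph{inverse} conjugacy $\phi^{-1}$ so that the intermediate set $\Lambda_\epsilon$ is MLD to the Meyer side (hence Meyer) and $\epsilon$-close to $\Lambda$. Your verification that $M'=\phi(\Lambda)\in\Omega_M$ is itself Meyer via characterization~(\ref{item-discrete}) is the right auxiliary step.
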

This does not imply that $\Lambda$ is itself Meyer, as the minimal distance between points of $\Lambda_\epsilon - \Lambda_\epsilon$ depends on $\epsilon$.

\section{Counterexamples}\label{Counter-ex}
In this section we provide examples of point sets with unusual behavior that at first sight seems surprising and was partly suspected to be impossible. Indeed we provide first a counterexample to Lagarias' Problem~4.10 \cite{Lagarias}, namely a repetitive FLC Delone set which is pure point diffractive but not Meyer.  
This example can already be found in \cite{FS}: it is the scrambled Fibonacci tiling with tile lengths $\phi$ and $1$. A variation of this tiling with tile lengths all equal yields an example of a repetitive Meyer set which is pure point diffractive but has non-topological eigenvalues. This is actually a special case of general phenomenon: any letter substitution can be suspended to a (possibly decorated) tiling whose tiles have equal length and so is a Meyer tiling. Then the dynamical spectrum of the $\Z$-action on the shift space coincides with the dynamical spectrum of the $\R$-action on the continuous hull. Bratteli diagram techniques can then be used to determine whether an eigenvalue is topological or not \cite{Durand1,Durand2,Durand3}.
  
For the reader's convenience, we sketch the construction of \cite{FS} in a form that allows variations in tile lengths, and include proofs of some results found in \cite{FS}.  The construction is similar in spirit to an S-adic system, although not technically S-adic itself.

\renewcommand{\k}{\kappa}

\subsection{Scrambled Fibonacci sequences}
Let $\sigma,P_1:\aaa\to \aaa^*$ be two maps on the alphabet $\aaa$.  
Both maps are extended in the usual way as morphisms on $\aaa^*$.  Let
$P_2=P_1\circ\sigma$. Then one obtains $P_2(a)$ upon replacing each
letter $o$ in $\sigma(a)$ by the word $P_1(o)$. We may also think of
$\sigma$ as a rule to compose (or fuse) words of the first generation
$\{P_1(o), o\in\aaa\}$ to form words of the next generation $P_2(a)$.

Now suppose that we have a family of substitutions
$\{\sigma^{(n)}\}_{n\in\N}$ and let $P_0 = \mbox{\rm id}$. The
composition $P_n = P_{n-1}\circ \sigma^{(n)}$ then describes the fusion
of words of generation $n-1$ to words of generation $n$. This is an
example of a fusion rule \cite{FS}.\footnote{Even in one dimension
  this is not the most general type of fusion rule, but it is the one
  we need.} The sequence space associated to the fusion rule is the
subset $X$ of $\aaa^\Z$ of doubly infinite sequences whose factors are
sub-words of some $P_n(o)$, $o\in\aaa$, $n\in\N$. Words of the form
$P_n(o)$ are called $n$th order superletters.
 
If all $\sigma^{(n)}$ are equal to a fixed substitution $\sigma$ then $X$ is the substitution sequence space associated to $\sigma$. 

For concreteness we will consider the Fibonacci substitution $\sigma$
on $\aaa=\{a,b\}$ defined by $\sigma(a) = ab$, $\sigma(b) = a$ and
denote $F_n = P_n$, i.e.\ 
$F_n=\sigma^n$. We denote its sequence space $X$ by $X_F$. Recall that
$F_n(a)$ contains $f_{n+1}$ letters $a$ and $f_{n}$ letters $b$ while
$F_n(b)$ contains $f_{n}$ letters $a$ and $f_{n-1}$ letters $b$. Here
$f_n$ is the $n$th Fibonacci number defined iteratively by $f_0=0$,
$f_1=1$ and
$f_{n+1} = f_n + f_{n-1}$.

Let $N(n)$ be a strictly and fast increasing function on $\N$ and
$\Delta(n) = N(n)-N(n-1)$. Let $A_n = F_{N(n)}$, which we may also
write as $A_n = A_{n-1}\circ\sigma^{\Delta(n)}$, viewing $A_n(o)$ as the
$n$th order superletters to the fusion rule
$\{\sigma^{\Delta(n)}\}_{n\in\N}$.  The sequence space $X_F$
can now also be described as the subset of doubly infinite sequences
in $\aaa^\Z$ whose factors are sub-words of some $A_n(a)$, $a\in\aaa$,
$n\in\N$.

Extend the alphabet with an extra letter $\tilde\aaa = \aaa\cup\{e\}$.
Let $\k$ denote an arbitrary odd natural number. 
Define the fusion rule by the following family,
$\tilde\sigma^{(n)}:\tilde\aaa \to \tilde\aaa^*$
$$
\tilde\sigma^{(n)}(x) = \left\{\begin{array}{ll}
\sigma^{\Delta(n)}(x) & \mbox{if } x \in \aaa,\: n\:\mbox{odd}  \\
\sigma_{e}^{\Delta(n)}(x) & \mbox{if } x \in \aaa,\: n\:\mbox{even}  \\
\sigma_{r}^{\Delta(n)}(b)  & \mbox{if } x = e
\end{array}\right. 
$$
where $\sigma_{e}^{\Delta(n)}(x)$ is the word obtained from $\sigma^{\Delta(n)}(x)$
by replacing the last letter $b$ with $e$, and
$\sigma_{r}^{\Delta(n)}(b)$ is the word obtained from
$\sigma^{\Delta(n)}(b)$ by rearranging all the $a$'s in front of the
$b$'s, that is $\sigma_{r}^{\Delta(n)}(b)=a^{f_{\Delta(n)}}b^{f_{\Delta(n)-1}}$.
We denote by $S_n(x)$ the $n$th order superletters of this fusion rule. 
That is, for $\k$ odd,
\begin{eqnarray}
\label{eq-S1}
S_\k(o) = S_{\k-1}\circ\sigma^{\Delta(\k)}(o),\mbox{ for }o \in\aaa, & 
S_\k(e) = S_{\k-1}\circ\sigma_{r}^{\Delta(\k)}(b) \\
\label{eq-S2}
S_{\k+1}(o) = S_{\k}\circ\sigma_{e}^{\Delta(\k+1)}(o),\mbox{ for }o \in\aaa, & 
S_{\k+1}(e) = S_{\k}\circ\sigma_{r}^{\Delta(\k+1)}(b)
\end{eqnarray}
The subset $X_S$ of doubly infinite sequences in $\tilde\aaa^\Z$ whose
factors are sub-words of some $S_n(x)$, $n\in\N$
is by definition the space of {\em scrambled Fibonacci
  sequences}. Note that none of the 1-superletters contain the letter $e$,
so this is in fact a subset of $\aaa^\Z$. In fact, none of the odd-order
superletters ever contain an even-order superletter of type $e$, so the
definition of $S_{\k+1}(e)$ is in fact irrelevant. 

In other words, the scrambling consists of two steps. At each odd level we
introduce a germ $S_\k(e)$ made from two periodic pieces. The presence of
$S_\k(e)$ superletters with $\k$ large will serve to eliminate topological
eigenvalues. At even levels we insert a single $S_k(e)$ into $S_{k+1}(b)$.
As long as $\Delta(\k+1)$ grows quickly enough, this makes the $e$-superletters
too rare to affect the measurable dynamics, and in fact tiling spaces
based on $X_S$ will 
prove to be measurable conjugate to those based on $X_F$.

We say that a fusion rule $\{S_n\}$ is recognizable if for each $n$,
each sequence in $X_S$ can be uniquely decomposed into a 
concatenation of sequences of superletters $S_n(x)$. For substitution 
tilings, recognizability is an automatic consequence of non-periodicity.
For fusions, it must be checked separately.

\begin{prop}[\cite{FS}] The scrambled Fibonacci fusion rule is recognizable. 
\end{prop}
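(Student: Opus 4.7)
My plan is to prove recognizability by induction on the superletter level $n$, exploiting a single structural mismatch: in any Fibonacci word the pattern $aaa$ never appears, whereas each rearranged superletter $S_n(e)=a^{f_{\Delta(n)}}b^{f_{\Delta(n)-1}}$ (read at level $0$) contains a run of $f_{\Delta(n)}$ consecutive $a$'s. I shall assume, as is implicit in \cite{FS}, that $N$ grows fast enough that $f_{\Delta(n)}\geq 3$ for every $n$. This dichotomy lets one localize the $e$-superletters at each level by inspection, after which the remaining pieces are parsed by the classical recognizability of $\sigma^{\Delta(n)}$ on the Fibonacci subshift.

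For the base case $n=1$, the superletters $S_1(a)=\sigma^{\Delta(1)}(a)$ and $S_1(b)=\sigma^{\Delta(1)}(b)$ are ordinary Fibonacci words, so runs of $a$'s inside them have length at most two, whereas $S_1(e)$ contains a block of at least three consecutive $a$'s. Any such long run appearing in a sequence of $X_S$ must therefore sit inside an $S_1(e)$-occurrence, and the rigid shape of $S_1(e)$ determines the two endpoints of that occurrence. After excising the $S_1(e)$-blocks, what remains is a Fibonacci sequence, which decomposes uniquely into $S_1(a)$'s and $S_1(b)$'s by the standard recognizability of $\sigma^{\Delta(1)}$.

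For the inductive step I would assume the result at level $n-1$, so that every $x\in X_S$ is already uniquely parsed as a bi-infinite concatenation of $(n-1)$-superletters, and then work in the super-alphabet $\{S_{n-1}(a),S_{n-1}(b),S_{n-1}(e)\}$. Reading the recursions (\ref{eq-S1})--(\ref{eq-S2}) at this level, each $S_n(o)$ with $o\in\aaa$ is a $\sigma^{\Delta(n)}$-image (for $n$ odd) or a $\sigma_e^{\Delta(n)}$-image (for $n$ even) of $o$, and hence contains at most two consecutive $S_{n-1}(a)$'s and, in the even case, a single trailing $S_{n-1}(e)$; whereas $S_n(e)$ is the rearrangement $S_{n-1}(a)^{f_{\Delta(n)}}\,S_{n-1}(b)^{f_{\Delta(n)-1}}$. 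The long-run argument from the base case, applied now in the super-alphabet, identifies every occurrence of $S_n(e)$, and the surrounding material is then parsed either by the pure Fibonacci recognizability of $\sigma^{\Delta(n)}$ (for $n$ odd) or using the $S_{n-1}(e)$-markers that tag the tail of each $S_n(o)$ (for $n$ even).

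The main obstacle I expect is to rule out that a run of at least three consecutive $S_{n-1}(a)$'s in the super-alphabet straddles the boundary between two legal $n$-superletters of types in $\aaa$, thereby creating a false $S_n(e)$-signature. By the induction hypothesis the $(n-1)$-superletter pattern inside any admissible concatenation of $S_n(a)$'s and $S_n(b)$'s is a legal Fibonacci sequence (decorated by occasional $S_{n-1}(e)$'s when $n$ is even), and thus admits no three consecutive $S_{n-1}(a)$'s; consequently the left end of every such long run must coincide with the left end of an $S_n(e)$, and the explicit length of $S_n(e)$ in the super-alphabet then fixes its right end. This closes the induction.
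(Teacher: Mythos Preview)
Your inductive argument is essentially the one in the paper, with one variation: the paper locates each $S_\kappa(e)$ at odd levels via the run of $b$'s (using that the factor $bb$ never occurs in a Fibonacci word), whereas you use the run of $a$'s (using that $aaa$ never occurs). The paper's choice makes the localisation step slightly cleaner: since every $\sigma^{\Delta(\kappa)}(o)$ begins with $a$ and contains no $bb$, the $b$-block of $S_\kappa(e)=S_{\kappa-1}(a)^{f_{\Delta(\kappa)}}S_{\kappa-1}(b)^{f_{\Delta(\kappa)-1}}$ cannot be extended across either boundary, so any occurrence of $S_{\kappa-1}(b)^2$ already pins down the right edge of an $S_\kappa(e)$ exactly. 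By contrast, your assertion that ``the left end of every such long run must coincide with the left end of an $S_n(e)$'' is not quite correct: the preceding $n$-superletter $S_n(o)$, being a Fibonacci word in the $(n-1)$-super-alphabet, may end in one or two $S_{n-1}(a)$'s, so the maximal $a$-run can overshoot the left boundary of $S_n(e)$ by up to two positions. This is a minor slip, easily repaired by anchoring instead on the \emph{right} end of the $a$-run (which is unambiguous, as it is followed by the $b$-block) and then reading off the known length $f_{\Delta(n)}+f_{\Delta(n)-1}$ of $S_n(e)$. With that fix, your odd-level step and your even-level use of the single $S_{n-1}(e)$ marker inside each $\sigma_e^{\Delta(n)}(o)$ match the paper's proof.
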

\begin{proof}
We prove this by induction. Let $\xi \in X_S$. 
Let $\k$ be odd and assume that $\xi$ can uniquely be written as
a sequence of $(\k-1)$-superletters (this is trivially true for $\k=1$). 
It is easy to see that $\sigma^{\Delta(\k)}(o)$
starts with $a$ and does not contain a $b^2$. Thus we can identify 
the powers $S_{\k-1}(b)^{f_{\Delta(\k)}-1}$ in $\xi$
and regroup $S_{\k-1}(a)^{f_{\Delta(\k)}}S_{\k-1}(b)^{f_{\Delta(\k)}-1}$ to $S_\k(e)$. 
The remaining $S_{\k-1}(a)$'s and $S_{\k-1}(b)$'s can be regrouped by applying
$\Delta(\k)$ times the procedure known from inverting the Fibonacci
substitution, namely first regroup all words $ab$ to $a$ and then
replace the remaining $a$'s (not the new ones!) by $b$. As for the  
Fibonacci substitution one sees that this is the only way to regroup. 

We now show that we can group the sequence into $(\k+1)$-superletters.  
Having grouped the sequence into $\k$-superletters, 
one can identify the $S_{\k+1}(b)$'s by the presence of an $S_\k(e)$. The
remaining ${\k+1}$-supertiles are all of type $a$, since even-order
superletters of type $e$ do not appear. 
\end{proof}

Let $\xi^{(n)}$ denote the $n$th order superletter 
that contains $\xi(0)$. (By recognizability, this is uniquely defined.) 
Let $X_S^*\subset X_S$ be the set of all sequences $\xi$ such that (a) 
the union of the words $\xi^{(n)}$ is all of $\xi$, and not just a half-line,
and (b) only finitely many $\xi^{(n)}$'s are of type $e$. For each $\xi \in
X_S^*$, let $n_0$ be the largest value of $n$ for which $\xi^{(n)}$ is of
type $e$ (or zero if none of these superletters are of type $e$.)

We may define a shift equivariant map
$$ \psi:X_S^* \to X_F$$
in the following way: $\psi(\xi) = \lim_{n\to \infty}\psi_n(\xi)$
where, for $n\geq n_0$, $\psi_n(\xi)$ is obtained from $\xi$ by
replacing $\xi^{(n)}$ with the corresponding Fibonacci
superletter $A_n(o)$, where $\xi^{(n)}$ is of type $o$. 
Note that $\psi_{n+1}(\xi)$ and $\psi_n(\xi)$ agree on the image of 
$\xi^{(n)}$, since neither $\xi^{(n)}$ nor $\xi^{(n+1)}$ is of type $e$. 
The limit is taken in the product topology. Shift-equivariance 
is guaranteed by the fact that the union of the $\xi^{(n)}$'s is all of
$\xi$.  

We cannot expect $\psi$ to be continuous, nor do we have 
a reasonable extension of $\psi$ to all of $X_S$. However, 
$\psi$ is measurable. With appropriate conditions on $N(n)$, 
$X_S^*$ has full measure with respect to the (unique \cite{FS}) invariant
Borel probability measure on $X_S$.
In fact, the frequency of the superletter $S_\k(e)$ in any sequence 
$\xi\in X_S$ is bounded by 
$\phi^{-\Delta(\k+1)}$, since there is only one $S_\k(e)$ per $S_{\k+1}(b)$.
As a result:
\begin{lem}[\cite{FS}] Let $\mu$ be an ergodic shift invariant Borel
  probability measure on $X_S$.  If $\sum_{k\: odd}
  \phi^{-\Delta(k+1)} <\infty$ then $X_S^*$ has full $\mu$-measure.
\end{lem}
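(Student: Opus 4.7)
The plan is to establish the two defining conditions of $X_S^*$ separately: condition (b), that only finitely many $\xi^{(n)}$ are of type $e$, will follow from Borel--Cantelli applied to the frequency bound preceding the lemma; condition (a), that $\bigcup_n \xi^{(n)} = \Z$ rather than a half-line, will follow from shift invariance together with the fact that level-$n$ superletter lengths grow to infinity.

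For (b), I would set $A_k = \{\xi \in X_S : \xi^{(k)} \text{ is of type } e\}$. By the fusion rule, even-order superletters of type $e$ never occur inside any higher-order superletter (as noted in the proof of recognizability), so $A_k = \emptyset$ for $k$ even. For $k$ odd, each $S_{k+1}(a)$ and each $S_{k+1}(b)$ contains exactly one copy of $S_k(e)$, which is the source of the stated uniform bound $\phi^{-\Delta(k+1)}$ on the density of positions covered by an $S_k(e)$-factor in any sequence in $X_S$. Since $\mu$ is shift invariant this density equals $\mu(A_k)$, up to a bounded multiplicative constant absorbing the ratio $|S_k(e)|/|S_k(\cdot)|$. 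The summability hypothesis then gives $\sum_{k\text{ odd}} \mu(A_k) < \infty$, and Borel--Cantelli yields $\mu(\limsup_k A_k) = 0$, which is exactly (b).

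For (a), the nesting $\xi^{(n)} \subset \xi^{(n+1)}$ makes $U(\xi) := \bigcup_n \xi^{(n)}$ an interval of $\Z$ containing $0$. If $U(\xi) \ne \Z$ then some endpoint, say the right one, is a finite integer $M$, and for every sufficiently large $n$ the position $M+1$ is the left boundary of the superletter immediately adjacent to $\xi^{(n)}$. Translating by $M+1$ and using shift invariance, this event has the same $\mu$-measure as the event that $0$ is the left boundary of $\xi^{(n)}$ for all large $n$. For each fixed $n$ the latter cylinder has $\mu$-measure equal to the density of level-$n$ left boundaries, which is the reciprocal of the mean length of a level-$n$ superletter and tends to zero as $n\to\infty$; so the intersection over large $n$ has measure zero. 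The case of a finite left endpoint is symmetric. Combining (a) and (b) yields $\mu(X_S^*) = 1$.

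The only delicate point is the translation of the ``frequency in any sequence'' estimate into the measure-theoretic bound on $\mu(A_k)$; but for a shift-invariant Borel probability measure this is routine, since the measure of a cylinder set equals the asymptotic density of its occurrences in a $\mu$-typical sequence. Everything else is a straightforward application of Borel--Cantelli and elementary properties of nested intervals.
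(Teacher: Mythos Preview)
Your argument is correct. The paper does not actually supply a proof of this lemma---it is cited from \cite{FS}---but the sentence immediately preceding it records exactly the frequency bound $\phi^{-\Delta(\kappa+1)}$ on $S_\kappa(e)$-superletters that you invoke, and your Borel--Cantelli completion for condition (b), together with the standard boundary-density argument for condition (a), is precisely the natural way to fill in that sketch.
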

\begin{cor}[\cite{FS}] Under the assumptions of the last lemma,
  $\psi$ defines a conjugacy between the shift measure dynamical
  systems $(X_S,\mu,\Z)$ and $(X_F,\Z)$ equipped with the unique
  ergodic Borel probability measure. In particular, $(X_S,\Z)$ is
  uniquely ergodic.
\end{cor}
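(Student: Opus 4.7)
The plan is to verify that $\psi$ is a measure-theoretic isomorphism between $(X_S,\mu,\Z)$ and $(X_F,\nu,\Z)$, where $\nu$ denotes the unique invariant probability measure on $X_F$, and then to deduce unique ergodicity of $(X_S,\Z)$ by a pushforward argument. First, the preceding lemma gives $\mu(X_S^*)=1$, and $\psi$ has already been shown to be shift-equivariant and measurable on $X_S^*$. Hence $\psi_*\mu$ is a shift-invariant Borel probability measure on $X_F$, and since the Fibonacci subshift is a primitive substitution system and therefore uniquely ergodic, $\psi_*\mu=\nu$.

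The heart of the argument is the construction of a measurable inverse $\varphi\colon X_F^* \to X_S^*$, defined on a full-$\nu$-measure subset $X_F^* \subset X_F$. The idea is to undo the scrambling level by level: given $\eta\in X_F$, use recognizability of the Fibonacci substitution to decompose $\eta$ uniquely into $A_n$-superletters at each level $n$; then in each occurrence of an $A_n(b)$ with $n$ even, replace the last $A_{n-1}(b)$ sub-word by $S_{n-1}(e)$, which by~(\ref{eq-S1}) is the rearranged word $S_{n-2}(a)^{f_{\Delta(n-1)}} S_{n-2}(b)^{f_{\Delta(n-1)-1}}$. Since the scrambling is deterministic at each level, this construction yields a well-defined sequence $\varphi(\eta)$ whenever the successive superletter decompositions are consistent and the origin lies in the interior of some superletter at every level; the set $X_F^*$ where this holds has full $\nu$-measure. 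One must further verify that $\varphi(\eta)\in X_S^*$ almost surely, i.e.\ that only finitely many $\varphi(\eta)^{(n)}$ are of type $e$; here the summability hypothesis $\sum_{k\text{ odd}}\phi^{-\Delta(k+1)}<\infty$ re-enters via Borel--Cantelli, since the $\nu$-probability that the origin lies in the unique $S_k(e)$-block inside a given $S_{k+1}(b)$-superletter decays at that rate. The identities $\psi\circ\varphi=\mathrm{id}$ on $X_F^*$ and $\varphi\circ\psi=\mathrm{id}$ on a full-$\mu$-measure subset of $X_S^*$ then follow from the explicit form of both maps, establishing the measure-theoretic conjugacy.

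For the unique ergodicity assertion, let $\mu'$ be any ergodic shift-invariant Borel probability measure on $X_S$. The preceding lemma applies to $\mu'$ as well, so $\mu'(X_S^*)=1$ and $\psi_*\mu'$ is a shift-invariant probability measure on $X_F$, forced to equal $\nu$ by unique ergodicity of $X_F$. Applying $\varphi_*$ and using $\varphi\circ\psi=\mathrm{id}$ almost everywhere gives $\mu'=\varphi_*\nu=\mu$, so $\mu$ is the only invariant probability measure on $X_S$. The main obstacle is the careful construction of $\varphi$: one must verify that the superletter decompositions at all levels are consistent on a full-measure set, that they extend to all of $\Z$ rather than to a half-line only, and that the Borel--Cantelli argument correctly confines the $e$-superletters containing the origin to finitely many levels. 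Once these measurability and full-measure statements are in place, the remainder of the argument is essentially formal.
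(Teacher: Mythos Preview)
The paper does not actually prove this corollary; it is attributed to \cite{FS} and stated without argument, so there is no in-paper proof to compare against.

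Your proposal is a correct and natural approach. The key idea---constructing an explicit measurable inverse $\varphi$ by replacing each $A_n$-superletter with the corresponding $S_n$-superletter, mirroring the definition of $\psi$---is the right one, and the unique-ergodicity argument via pushforward and pullback is sound. Two small points are worth tightening. First, for the unique-ergodicity step you need $\varphi\circ\psi=\mathrm{id}$ not merely $\mu$-a.e.\ but $\mu'$-a.e.\ for an \emph{arbitrary} ergodic $\mu'$; this follows because the set on which $\varphi\circ\psi=\mathrm{id}$ can be taken to be $X_S^*\cap\psi^{-1}(X_F^*)$, a set defined without reference to any particular measure, and the lemma together with $\psi_*\mu'=\nu$ gives it full $\mu'$-measure. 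Second, your description of $\varphi$ (``replace the last $A_{n-1}(b)$ sub-word by $S_{n-1}(e)$'') is slightly informal, since the scrambling is recursive across all even levels rather than acting at a single level; the clean formulation is $\varphi(\eta)=\lim_{n\to\infty}\varphi_n(\eta)$, where $\varphi_n$ replaces the $A_n$-superletter containing the origin (of type $o\in\{a,b\}$) by $S_n(o)$, exactly paralleling the definition of $\psi$. With these clarifications in place, the argument goes through.
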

We recall also from \cite{FS} that the shift dynamical system
$(X_S,\Z)$ is minimal.

\subsection{Scrambled Fibonacci tilings}
We now make tilings out of sequences by suspending the letters to
intervals which we call tiles. 
We give the tile corresponding to $a$ length $L$ and
the tile corresponding to $b$ length $S$. Images of superletters are
called supertiles. 
We will denote the corresponding tiling spaces by $\Omega_F^{L,S}$ and
$\Omega_S^{L,S}$. As for the discrete systems on obtains that if
$\sum_{\k\: odd} \phi^{-\Delta(\k+1)} <\infty$ then $(\Omega_S^{L,S},\R)$
is uniquely ergodic and measure conjugate to $(\Omega_F^{L,S},\R)$ so
that in particular their  dynamical spectra coincide. Furthermore, if
$L\phi + S = L'\phi + S'$ then deformation theory shows that
$(\Omega_F^{L,S},\R)$ is topologically conjugate to
$(\Omega_F^{L',S'},\R)$ \cite{RS}. 
We are asking which eigenvalues of $(\Omega_S^{L,S},\R)$ are
topological and the answer will depend on the values of $L,S$.  
To answer this question we make use of the following corollary to 
Lemma~\ref{lem-wP}.
By a return vector between $n$-supertiles we mean a vector $v$ such
that in some high order supertile we find the $n$-supertile $S_n(o)$
at position $x$ and at position $x+v$. 
For a real number $r$ let  $\|r\|$ denotes the distance from $r$ to
the nearest integer. 

\begin{cor}\label{cor-wP}
$\beta$ is a topological eigenvalue if and only if for all 
$\epsilon>0$ there exists an $n_0$ such that for all return vectors 
$v$ between supertiles of order $n\geq n_0$ one has 
$\| \beta(v)\|\leq \epsilon$. 
\end{cor}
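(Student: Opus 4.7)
The plan is to combine Lemmas \ref{prop-EV} and \ref{lem-wP}. By Lemma \ref{prop-EV}, $\beta$ is a topological eigenvalue if and only if $f(x):=e^{2\pi i\beta(x)}$ is weakly pattern equivariant, and since $f$ is manifestly uniformly continuous, Lemma \ref{lem-wP} reduces this to the following: for every $\eta>0$ there exists $R>0$ such that whenever $(\Lambda-x)\cap B_R(0)=(\Lambda-y)\cap B_R(0)$ one has $|f(x)-f(y)|<\eta$. Writing $|f(x)-f(y)|=|1-e^{2\pi i\beta(y-x)}|$, which is small exactly when $\|\beta(y-x)\|$ is small, the corollary becomes a matter of translating ``$R$-balls agree'' into ``$y-x$ is a return vector between high-order supertiles'' and vice versa.

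For the forward direction, suppose $\beta$ is a topological eigenvalue. Given $\epsilon>0$, use Lemma \ref{lem-wP} to pick an $R$ that forces $|f(x)-f(y)|$ small enough that $\|\beta(y-x)\|\leq\epsilon$. Choose $n_0$ so large that every supertile $S_n(o)$ with $n\geq n_0$ contains a ball of radius $R$; this is possible by the growth of supertile sizes in the scrambled Fibonacci fusion. If $v$ is a return vector between two occurrences of an $n$-supertile $S_n(o)$, pick a point $x$ in one occurrence whose $R$-neighbourhood lies entirely inside that copy; then $x+v$ sits at the corresponding interior position of the other occurrence, so the patterns in $B_R(x)$ and $B_R(x+v)$ coincide, giving $\|\beta(v)\|\leq\epsilon$.

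For the converse, assume the return-vector condition. Given $\eta>0$, pick $\epsilon>0$ so that $\|t\|\leq\epsilon$ implies $|1-e^{2\pi i t}|<\eta$, let $n_0$ be the value provided by the hypothesis, and take $R$ larger than the \emph{recognition radius} at level $n_0$, i.e., large enough that the pattern of $\Lambda$ in $B_R(x)$ determines which $n_0$-supertile contains $x$ together with $x$'s internal position within it. If the $R$-patterns around $x$ and $y$ agree, then $x$ and $y$ sit at corresponding interior positions of two occurrences of the same $n_0$-supertile, so $y-x$ is a return vector between $n_0$-supertiles, whence $\|\beta(y-x)\|\leq\epsilon$ and $|f(x)-f(y)|<\eta$. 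Lemma \ref{lem-wP} then yields weak pattern equivariance of $f$.

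The only delicate point is the existence of a finite recognition radius for the level-$n_0$ fusion decomposition; this follows from the recognizability of the scrambled Fibonacci fusion proved in the preceding proposition, combined with a standard compactness argument on the hull (unique parseability everywhere, together with compactness, forces the parse to be determined from a uniformly bounded neighbourhood).
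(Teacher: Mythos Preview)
Your argument is correct and is precisely the derivation the paper has in mind: the corollary is stated without proof as an immediate consequence of Lemma~\ref{lem-wP} (together with Lemma~\ref{prop-EV}), and your two directions---supertiles large enough to contain an $R$-ball for the forward implication, and local recognizability of the level-$n_0$ decomposition for the converse---are exactly the translations needed. Your remark that recognizability plus compactness yields a finite recognition radius is the right justification for the one nontrivial ingredient.
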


This corollary applies to the Fibonacci tiling and to the scrambled
Fibonacci tilings for all values of $L,S$. For the Fibonacci tiling with
$L=\phi$ and $S=1$ it is known that all eigenvalues are
topological and then one can read also from the corollary that a
necessary and sufficient condition for $\beta\in\R^*=\R$ to be an eigenvalue is
$\|\beta \phi^n\|\to 0$ which is equivalent to
$\beta\in\frac1{\sqrt{5}}\Z[\phi]$. After rescaling the tiles by $\sqrt{5}$
and doing a shape change that preserves $\phi L + S$, we get that the
group of eigenvalues for $(\Omega_F^{1,1},\Z)$ is
 $\Z[\phi]$.  
\begin{thm} [\cite{FS}] 
Suppose that $\Delta(\k)\geq N(\k-1)$ for odd $\k$  and that $\sum_{\k\: odd}\phi^{-\Delta(\k+1)}<\infty$.
If $L=\phi$ and $S=1$ then the only topological eigenvalue is 0. 
\end{thm}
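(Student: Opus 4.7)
The plan is to apply Corollary~\ref{cor-wP} to the arithmetic progression of return vectors produced by the scrambled germ $S_\kappa(e)$. First, I record the local geometry: by~\eqref{eq-S1}, for odd $\kappa$
$$S_\kappa(e)=S_{\kappa-1}\bigl(a^{f_{\Delta(\kappa)}}b^{f_{\Delta(\kappa)-1}}\bigr),$$
so $f_{\Delta(\kappa)}$ consecutive copies of the $(\kappa-1)$-supertile $S_{\kappa-1}(a)$ sit in a row inside $S_\kappa(e)$. With $L=\phi$, $S=1$, and the identity $\phi^n=f_n\phi+f_{n-1}$, the length $\ell_\kappa:=|S_{\kappa-1}(a)|$ equals $\phi^{N(\kappa-1)+1}$. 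Hence each $k\ell_\kappa$ with $k=1,\dots,f_{\Delta(\kappa)}-1$ is a return vector between $(\kappa-1)$-supertiles, and the order $\kappa-1$ tends to infinity through odd $\kappa$.

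Next, I convert the simultaneous Corollary~\ref{cor-wP} constraint into a one-point diophantine estimate. Suppose $\beta$ is a topological eigenvalue and fix $\epsilon<\tfrac14$. For $\kappa$ large one has $\|\beta\,k\ell_\kappa\|<\epsilon$ for every admissible $k$. Writing $\beta\ell_\kappa=N_\kappa+r_\kappa$ with $N_\kappa\in\Z$ and $|r_\kappa|\le\tfrac12$, a short gap argument on $\R/\Z$ shows that $|r_\kappa|>1/(2(f_{\Delta(\kappa)}-1))$ would force some $k\le f_{\Delta(\kappa)}-1$ to satisfy $\|kr_\kappa\|\ge\tfrac14$; so $|r_\kappa|\le 1/(2(f_{\Delta(\kappa)}-1))$, and then $\|kr_\kappa\|=k|r_\kappa|$ on the whole range, yielding $|r_\kappa|<\epsilon/(f_{\Delta(\kappa)}-1)$. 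Using $f_m\asymp\phi^m$ together with the hypothesis $\Delta(\kappa)\ge N(\kappa-1)$, this rearranges to
$$\phi^{N(\kappa-1)+1}\bigl\|\beta\phi^{N(\kappa-1)+1}\bigr\|\;\longrightarrow\;0\quad\text{as odd }\kappa\to\infty.$$

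Separately, the plain Fibonacci substructure of the supertiles $S_\kappa(a)$ shows, by tracking $a$-$a$ return distances in $F_{\Delta(\kappa)}(a)$ and applying the uniform scaling $a\mapsto S_{\kappa-1}(a)$, $b\mapsto S_{\kappa-1}(b)$ of factor $\phi^{N(\kappa-1)}$, that $\phi^n$ is a return vector between $(\kappa-1)$-supertiles for every $n$ in the range $N(\kappa-1)+1\le n\le N(\kappa)+1$. These ranges cover all sufficiently large $n$ and the supertile order $\kappa-1$ diverges with $n$, so Corollary~\ref{cor-wP} already forces $\|\beta\phi^n\|\to 0$ on the full sequence. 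The hard step is the final Pisot reconciliation. Set $\eta_n=\beta\phi^n-N_n$ with $N_n$ the nearest integer; the identity $\beta\phi^{n+1}=\beta\phi^n+\beta\phi^{n-1}$ gives
$$N_{n+1}-N_n-N_{n-1}=\eta_n+\eta_{n-1}-\eta_{n+1},$$
an integer of absolute value less than $1$ for large $n$, hence zero. So $\eta_n$ eventually satisfies the Fibonacci recursion and is of the form $A\phi^n+B\hat\phi^n$ with $\hat\phi=-1/\phi$; the condition $\eta_n\to 0$ forces $A=0$, and then $\phi^n\|\beta\phi^n\|\to|B|$ because $\phi\hat\phi=-1$. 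Comparing with the subsequential bound established above yields $|B|=0$, whence $\eta_n=0$ for all large $n$; then $\beta\phi^n,\beta\phi^{n+1}\in\Z$ for some large $n$ forces $\beta=0$ by the irrationality of $\phi$. The principal obstacle is this last Pisot step, where the specific arithmetic of $\phi$ is decisive; the earlier paragraphs are essentially bookkeeping inside supertiles.
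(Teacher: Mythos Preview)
Your argument is correct and reaches the conclusion by a genuinely different route from the paper's. The paper exploits the measure conjugacy with the Fibonacci system: since a topological eigenvalue is in particular a measurable one, it lies in $\frac{1}{\sqrt{5}}\Z[\phi]$, so $\|\beta\phi^n\|\to 0$ comes for free. The paper then selects just two return vectors $v_m=f_{N(\kappa-1)-m}\,\phi^{N(\kappa-1)+1}$ ($m=1,2$) inside the periodic block of $S_\kappa(e)$, expands $5\beta v_m$ via the closed form for $f_n$, and reads off the incompatible constraints $\beta(\phi^2+1)\phi^m\in\Z$ directly.

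Your version is entirely topological: you recover $\|\beta\phi^n\|\to 0$ from Corollary~\ref{cor-wP} using the Fibonacci-like arrangement of $(\kappa-1)$-supertiles inside $S_\kappa(a)$, and you replace the paper's two hand-picked return vectors by the full arithmetic progression $k\ell_\kappa$, extracting the sharper bound $\phi^{N(\kappa-1)+1}\|\beta\phi^{N(\kappa-1)+1}\|\to 0$ via a gap argument on $\R/\Z$. The Pisot recursion then pins down $\eta_n=B\hat\phi^n$ and forces $B=0$. This is longer but self-contained; it does not invoke the measurable spectrum at all, which is conceptually cleaner for a statement about topological eigenvalues.

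Two small points to tighten. First, your third paragraph tacitly uses \emph{all} $\kappa$, not just odd ones: for even $\kappa$ the word governing $S_\kappa(a)$ is $\sigma_e^{\Delta(\kappa)}(a)$ rather than $F_{\Delta(\kappa)}(a)$, but since $|S_{\kappa-1}(e)|=|S_{\kappa-1}(b)|$ the $a$-supertile positions are unchanged, so the same return distances $\phi^{N(\kappa-1)+k}$ arise. You need both parities for the ranges $[N(\kappa-1)+1,N(\kappa)]$ to cover all large $n$; say so explicitly. Second, the top of your range should be $N(\kappa)$ rather than $N(\kappa)+1$ (the largest return inside $F_{\Delta(\kappa)}(a)$ is $\phi^{\Delta(\kappa)}$), though this is immaterial since consecutive ranges abut.
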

\begin{proof}
Suppose that $\beta\in\R^*=\R$ is a topological eigenvalue.  By
Corollary~\ref{cor-wP},  the maximum value of 
$\|\beta v\|$, where $v$ is a return vector to 
$\k-1$-supertiles of type $a$, must go to zero as $\k \to \infty$. 
Likewise, $\|5 \beta v\| \le 5 \|\beta v\|$ must go to zero. Note that the scrambling was made
in such a way that $S_n(a)$ has the same length as $F_{N(n)}(a)$, namely
$\phi^{N(n)+1}$.
Since $S_{\k}(e)$ contains $f_{\Delta(\k)}$ consecutive $S_{\k-1}(a)$'s, and 
since $\Delta(\k) \ge N(\k-1)$, 
$v$ can take on the particular values 
$v_1=f_{N(\k-1)-1}|S_{\k-1}(a)|= f_{N(\k-1)-1}\phi^{N(\k-1)+1}$ and 
$v_2=f_{N(\k-1)-2}|S_{\k-1}(a)| = f_{N(\k-1)-2}\phi^{N(\k-1)+1}$.
Using the identity $f_{n} = (\phi^n - (-\phi)^{-n})/\sqrt{5}=(\phi^{n}-(-\phi)^{-n})(\phi+\phi^{-1})/5$, we obtain
$$ \|5\beta v_m\| = 
\|\beta (\phi^2+1)(\phi^{2N(\k-1)-m} - (-1)^{N(\k-1)-m} \phi^{m})\|,$$
where $m=1$ or 2. 
Since $\beta$ is a measurable eigenvalue,
$\|\beta \phi^n\| \to 0$ as $n \to \infty$. This means that $\|\beta v_m \|$
can only go to zero if  
$\beta (\phi^2+1) \phi^m\in\Z$. However, $\phi$ is irrational, so this
cannot be true for both $m=1$ and $m=2$ unless $\beta=0$.\end{proof}
\begin{cor}
Suppose that $\Delta(\k)\geq N(\k-1)$ for odd $\k$ and that $\sum_{\k\: odd}\phi^{-\Delta(\k+1)}<\infty$.
Then the scrambled Fibonacci tiling with lengths $|a|=\phi$ and 
$|b|=1$ is repetitive and pure point diffractive but not Meyer.  
\end{cor}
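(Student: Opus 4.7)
The plan is to verify the three assertions (repetitivity, pure point diffractivity, failure of the Meyer property) separately, leveraging the material already assembled in this section together with Theorem~\ref{main}. Repetitivity will be the easiest: it follows from the minimality of $(X_S,\Z)$, recalled from \cite{FS} at the end of the previous subsection, since minimality of the shift space passes to its suspension and, for an FLC tiling, minimality of the hull is equivalent to repetitivity of any (hence every) tiling in it.

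For pure point diffractivity I would invoke the measurable conjugacy between $(\Omega_S^{L,S},\R,\mu)$ and $(\Omega_F^{L,S},\R)$ recorded at the beginning of the ``Scrambled Fibonacci tilings'' subsection: under the summability hypothesis $\sum_{\k\text{ odd}}\phi^{-\Delta(\k+1)}<\infty$ both systems are uniquely ergodic and the suspension of $\psi$ intertwines them measurably. The Fibonacci tiling is a Pisot substitution tiling (equivalently, a regular model set), so its dynamical system has pure point spectrum. Since pure point spectrum is an invariant of measurable conjugacy, $(\Omega_S^{\phi,1},\R,\mu)$ also has pure point dynamical spectrum, and Dworkin's argument as recalled in the Introduction then delivers pure point diffractivity of the scrambled Fibonacci set.

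For the failure of the Meyer property I would use Theorem~\ref{main} in contrapositive form: a repetitive FLC Delone set in $\R$ topologically conjugate to a Meyer set must possess $d=1$ linearly independent topological eigenvalues, i.e.\ at least one nonzero one. The theorem just proved shows that, with $L=\phi$ and $S=1$, the only topological eigenvalue of the scrambled Fibonacci tiling is $0$. Hence this tiling is not topologically conjugate to any Meyer set, and in particular it cannot itself be Meyer. The only delicate point in the argument is the transport of pure point spectrum across the measurable conjugacy in step two; everything else is an immediate consequence of results already proved in this section combined with the main theorem.
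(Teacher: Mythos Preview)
Your proposal is correct and matches the argument the paper intends: the corollary is stated without proof precisely because it is an immediate consequence of the preceding theorem (no nonzero topological eigenvalue) together with the ``in particular'' clause of Theorem~\ref{main}, the measure conjugacy with $(\Omega_F^{\phi,1},\R)$ (giving pure point dynamical spectrum, hence pure point diffractivity), and the minimality of $(X_S,\Z)$ recalled from \cite{FS} (giving repetitivity). The only minor remark is that you need not pass through ``not topologically conjugate to a Meyer set''; the second sentence of Theorem~\ref{main} directly says a repetitive Meyer set has $d$ independent topological eigenvalues, which is already the required contrapositive.
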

\begin{thm} 
Suppose that $\Delta(\k)\geq N(\k-1)$ for $\k$ odd  and that $\sum_{\k\: odd}\phi^{-\Delta(\k+1)}<\infty$.
If $L=S=1$ then the group of topological eigenvalues is $\Z$. 
\end{thm}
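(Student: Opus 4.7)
The plan is to prove both inclusions: every integer is a topological eigenvalue, and every topological eigenvalue is an integer.

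For the easy inclusion $\Z\subset\{\text{topological eigenvalues}\}$: when $L=S=1$ every tiling in $\Omega_S^{1,1}$ has vertex set of the form $c+\Z$ for a unique $c\in\R/\Z$, so the offset map $\pi\colon\Omega_S^{1,1}\to\R/\Z$ is a continuous equivariant factor map. Pulling back the character $x\mapsto e^{2\pi inx}$ of $\R/\Z$ produces a continuous eigenfunction of $(\Omega_S^{1,1},\R)$ with eigenvalue $n$ for every $n\in\Z$.

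For the reverse inclusion, let $\beta$ be a topological eigenvalue. Since $(\Omega_S^{1,1},\R)$ is measure conjugate to the pure Fibonacci system $(\Omega_F^{1,1},\R)$, whose (measurable) eigenvalue group is $\Z[\phi]$, I may write $\beta=a+b\phi$ with $a,b\in\Z$, and it remains to show $b=0$. The crucial geometric input is that inside $S_\kappa(e)$ for odd $\kappa$ lie $f_{\Delta(\kappa)}$ consecutive copies of $S_{\kappa-1}(a)$; with $L=S=1$ each copy has length $f_{N(\kappa-1)+2}$, so $jf_{N(\kappa-1)+2}$ is a return vector between $(\kappa-1)$-supertiles of type $a$ for every $1\le j<f_{\Delta(\kappa)}$. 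The hypothesis $\Delta(\kappa)\ge N(\kappa-1)$ permits the choice $j=f_{N(\kappa-1)-1}$, producing the return vector $v_\kappa:=f_{N(\kappa-1)-1}f_{N(\kappa-1)+2}$.

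By Corollary~\ref{cor-wP}, $\|\beta v_\kappa\|\to 0$, and therefore $\|5\beta v_\kappa\|\to 0$, as $\kappa\to\infty$ through odd values. With $\bar\phi:=(1-\sqrt5)/2$, the product identity $5f_af_b=L_{a+b}-(-1)^aL_{b-a}$ (for $a\le b$; derived from $\sqrt5\,f_n=\phi^n-\bar\phi^n$ and $\phi\bar\phi=-1$) yields $5v_\kappa=L_{2N(\kappa-1)+1}-(-1)^{N(\kappa-1)-1}L_3$ with $L_3=4$, whence $5\beta v_\kappa=\beta L_{2N(\kappa-1)+1}-4(-1)^{N(\kappa-1)-1}\beta$. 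A parallel identity $L_n\phi=L_{n+1}+\sqrt5\,\bar\phi^n$ gives $\beta L_n\equiv b\sqrt5\,\bar\phi^n\pmod 1$, so $\|\beta L_n\|=|b|\sqrt5\,\phi^{-n}\to 0$. Passing to the limit forces $\|4\beta\|=0$, i.e.\ $4\beta\in\Z$; writing $4\beta=(4a+2b)+2b\sqrt5$ and using the irrationality of $\sqrt5$ gives $b=0$, as desired.

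The main obstacle is orchestrating the Fibonacci/Lucas arithmetic so that the ``large-index'' term $\beta L_{2N(\kappa-1)+1}$ tends to zero in the limit while the ``small-index'' term $4\beta$ survives unchanged; once this decoupling is in place, irrationality of $\sqrt5$ finishes the proof in one line.
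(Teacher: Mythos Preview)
Your proof is correct and follows the same strategy as the paper's: invoke Corollary~\ref{cor-wP} on return vectors coming from the periodic block inside $S_\kappa(e)$, combine with Fibonacci/Lucas identities, and use that $\beta$ already lies in the measurable spectrum $\Z[\phi]$. The only difference is cosmetic: the paper uses \emph{two} return vectors $v_m=f_{N(\kappa-1)-m}|S_{\kappa-1}(a)|$ for $m=1,2$ to deduce $4\beta\in\Z$ and $7\beta\in\Z$ and then concludes via $\gcd(4,7)=1$, whereas you use only $m=1$ to get $4\beta\in\Z$ and finish directly from $4\beta=(4a+2b)+2b\sqrt5$ and the irrationality of $\sqrt5$ --- a mild streamlining of the same argument.
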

\begin{proof} Since $L=S=1$ all return vectors are integers and hence any integer $\beta$ is a topological eigenvalue. To prove that the rest of the dynamical spectrum is not topological we use 
the same idea as for the preceding theorem. The difference is that now 
$$|S_{\k-1}(a)| = f_{N(\k-1)+1} + f_{N(\k-1)}= 
\frac1{\sqrt{5}}(\phi^{N(\k-1)+2}-(-\phi)^{-N(\k-1)-2}).
$$
Accordingly we obtain as necessary criterion for $\beta$ to be a topological eigenvalue 
that
$$\|5\beta v_m\|= \|\beta (\phi^{N(\k-1)-m}-(-\phi)^{-N(\k-1)+m})(\phi^{N(\k-1)+2}-(-\phi)^{-N(\k-1)-2})\|  \stackrel{\k \to +\infty}{\longrightarrow} 0$$
 for $m = 1$ or 2.
 Since  $\|\beta\phi^n\|\to 0$ as $n\to  \pm\infty$, we conclude that 
 $\beta (\phi^{-m-2}+(-1)^{m}\phi^{m+2})\in\Z$ for $m= 1$ or $m=2$. Since $\beta(\phi^{-3}-\phi^3) = -4 \beta$ and $\beta(\phi^{-4}+\phi^4)=7\beta$ are integers, $\beta \in \Z$. 
\end{proof}
\begin{cor}
Suppose that $\Delta(\k)\geq N(\k-1)$ for odd $\k$ and that
$\sum_{\k\: odd}\phi^{-\Delta(\k+1)}<\infty$. Then the scrambled
Fibonacci tiling with equal tile lengths is a repetitive pure point
diffractive Meyer tiling having some eigenvalues that are not
topological. The topological eigenvalues form a direct summand
subgroup of rank $1$ in the full group of eigenvalues which has rank
$2$.   
\end{cor}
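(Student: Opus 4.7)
The plan is simply to collect pieces that have been established in the preceding development; there is no new ingredient to introduce. First I would handle the two geometric properties. Repetitivity follows because the paper already notes that $(X_S,\Z)$ is minimal, and since in the $L=S=1$ suspension all tiles have the common length $1$, minimality of the shift translates directly to minimality of the $\R$-action on $\Omega_S^{1,1}$, which is equivalent to repetitivity of any tiling in it. For the Meyer property, with $L=S=1$ every tile endpoint is an integer, so the vertex set $\Lambda$ of any tiling in $\Omega_S^{1,1}$ lies in a translate of $\Z$; hence $\Lambda-\Lambda\subset\Z$ is uniformly discrete, and criterion (\ref{item-discrete}) in the characterization of Meyer sets gives at once that $\Lambda$ is Meyer.

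Next I would address the diffraction spectrum. The measure conjugacy between $(X_S,\mu,\Z)$ and $(X_F,\Z)$ provided by $\psi$ lifts to a measure conjugacy $(\Omega_S^{1,1},\R,\mu)\to(\Omega_F^{1,1},\R,\mu)$, since the tile lengths agree in the two systems so that the suspensions are simply the mapping tori of the same height function. This measure isomorphism preserves the measurable eigenvalue group. The Fibonacci tiling space $(\Omega_F^{1,1},\R,\mu)$ has pure point dynamical spectrum with group of eigenvalues $\Z[\phi]$ (as recalled in the text just before the previous theorem), so the same holds for $(\Omega_S^{1,1},\R,\mu)$. By Dworkin's argument cited in the Introduction, pure point dynamical spectrum implies that the tiling is pure point diffractive.

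Finally I would combine the spectral information. The previous theorem shows that for $L=S=1$ the group of topological eigenvalues is exactly $\Z$, while the measurable eigenvalue group is $\Z[\phi]$. Since $\phi$ is irrational, $\{1,\phi\}$ is a $\Z$-basis of $\Z[\phi]$, so $\Z[\phi]$ is free abelian of rank $2$ and $\Z=\Z\cdot 1$ is a rank-$1$ direct summand, with complement $\Z\cdot\phi$. In particular the inclusion of topological eigenvalues into measurable eigenvalues is strict, so there exist measurable eigenvalues that are not topological, completing all the assertions.

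I do not expect any real obstacle: the corollary is essentially a bookkeeping consequence of the preceding theorem together with the Fibonacci measure conjugacy and the $\Lambda-\Lambda$ criterion for Meyer sets. The only mildly delicate point to be careful about is explaining clearly why the sequence-level measure conjugacy $\psi$ survives suspension when and only when the two systems use the same tile-length function, which is precisely the reason the construction is run with $L=S=1$ here rather than with $L=\phi$, $S=1$ as in the previous corollary.
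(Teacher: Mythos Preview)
Your proposal is correct and matches the paper's intended argument: the corollary is stated without proof precisely because it is a bookkeeping consequence of the preceding theorem (topological eigenvalues equal $\Z$), the measure conjugacy $(\Omega_S^{L,S},\R)\cong(\Omega_F^{L,S},\R)$ already recorded in the text, the identification of the Fibonacci eigenvalue group as $\Z[\phi]$, and the elementary observation that equal tile lengths force the vertex set into $\Z$. Your explanation of each step is accurate; the only minor remark is that the paper already asserts the measure conjugacy at the level of $(\Omega_S^{L,S},\R)$ for arbitrary $L,S$, so the ``delicate point'' about matching tile lengths is not something you need to re-argue here.
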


\subsection{A non-Meyer shape change of a Meyer tiling}

\begin{thm}
  There are FLC sets that are topologically conjugate to Meyer sets but
  are not themselves Meyer.
\end{thm}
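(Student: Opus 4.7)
My plan is to construct an explicit shape conjugacy from a Meyer tiling to a non-Meyer FLC Delone set. Take a $2$-dimensional Meyer substitution tiling $\T$ with enough edge types that the tile-vector relation lattice has rank at least $3$ (for example a Penrose-type tiling, with its many oriented edge types), and let $\Lambda = \text{vert}(\T)$. Label the edge types $e_1,\dots,e_n$ with corresponding vectors $v_1,\dots,v_n \in \R^2$, and set $R = \{(k_i) \in \Z^n : \sum_i k_i v_i = 0\}$, the relation lattice of the tile vectors.

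I will choose a strongly pattern equivariant $1$-cochain $\alpha$ of radius zero, assigning to each edge of type $e_i$ a small vector $u_i \in \R^2$, subject to the asymptotic negligibility constraint $\sum_i f_i u_i = 0$ (with $f_i$ the frequency of $e_i$). Corollary~\ref{cor-exact} then produces a bounded weakly pattern equivariant primitive $F:\Lambda \to \R^2$ with $\delta F = \alpha$. The $u_i$ are chosen generically within the asymptotically negligible subspace so that the $\Z$-linear map $\Phi:R \to \R^2$, $(k_i)\mapsto \sum_i k_i u_i$, has dense image in $\R^2$; this is a generic condition once $R$ has rank $\geq 3$. Define $\Lambda' = \{x + F(x) : x \in \Lambda\}$ and take $\|F\|_\infty$ less than half the minimum gap of $\Lambda$ to guarantee that $x \mapsto x+F(x)$ is injective, so $\Lambda'$ is a FLC Delone set and the construction of Section~5.1 promotes the shape change to a topological conjugacy $\phi_2 : \Omega_\Lambda \to \Omega_{\Lambda'}$.

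The crux is that $\Lambda'$ is not Meyer. For two pairs $(x_1,y_1), (x_2,y_2) \in \Lambda^2$ with $y_j - x_j = v$, path-independence of $\delta F$ yields
\[
(F(y_1) - F(x_1)) - (F(y_2) - F(x_2)) = \Phi(\Delta n),
\]
where $\Delta n \in R$ is the signed edge-count signature of the closed loop obtained by following one path from $x_1$ to $y_1$ and the reverse of the (translated) other. By repetitivity of $\T$, every elementary tile-loop can be realized as the symmetric difference of two paths between some vertex pair, so as $v$ ranges over $\Lambda - \Lambda$ the realizable $\Delta n$ exhaust a sublattice of $R$ of the full rank. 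Combined with density of $\Phi(R)$, for any $\epsilon > 0$ I can find $v$ and two such pairs whose $F$-increments differ by a nonzero vector of norm less than $\epsilon$. The two corresponding elements of $\Lambda' - \Lambda'$ are then distinct points within $\epsilon$ of each other, so $\Lambda' - \Lambda'$ fails to be uniformly discrete and $\Lambda'$ is not Meyer.

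The main technical obstacle is the combinatorial claim that the realizable $\Delta n$ generate a sublattice of $R$ of the same rank as $R$. This needs a geometric argument using repetitivity and the substitution hierarchy: any prescribed elementary tile-loop in $\T$ can, by repetitivity, be realized as the symmetric difference of two alternate paths between vertices $x$ and $y = x + v$ for a suitable $v$; stitching such loops together at arbitrary hierarchical levels then generates the required full-rank sublattice of $R$. Given this, density of $\Phi(R)$ transfers to density of $\bigcup_v (D_v - D_v)$ near $0$, where $D_v = \{F(y) - F(x) : y - x = v\}$, and the failure of uniform discreteness of $\Lambda' - \Lambda'$ follows.
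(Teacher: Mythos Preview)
Your overall strategy --- construct an asymptotically negligible shape conjugacy of a Meyer substitution tiling and show the result is not Meyer --- is exactly the paper's strategy.  But the execution has two intertwined errors, and your chosen example (Penrose) in fact \emph{cannot} work.

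First, Corollary~\ref{cor-exact} does not ``produce'' a bounded primitive: boundedness of $F$ is a \emph{hypothesis}, and the corollary only upgrades a bounded $F$ with \sP\ coboundary to a \wP\ one.  You must independently prove that the primitive of $\alpha$ is bounded.  Your stated condition $\sum_i f_i u_i = 0$ is merely orthogonality to the Perron eigenvector; in general this gives $F(x)=o(|x|)$, not $F$ bounded.  The correct asymptotic--negligibility condition is that the cohomology class of $\alpha$ lie in the sum of generalized eigenspaces of the substitution with eigenvalue of modulus $<1$ (see \cite{CS2}).  Only for a Pisot substitution do the two conditions coincide.

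Second, and fatally, Penrose \emph{is} Pisot, and for Pisot tilings every asymptotically negligible shape change stays Meyer.  Concretely: Penrose vertices form a model set $\Lambda=\{p(z):z\in\Z^5,\ p^*(z)\in W\}$, your radius--zero cocycle forces $F(a)=\sum_i z_i u_i$, and boundedness of $F$ forces $u$ to pair trivially with the physical directions, i.e.\ $F(a)=U(p^*(z))$ for a linear $U$.  Then $\Lambda'=\{(p+U\circ p^*)(z): p^*(z)\in W\}$ is again a model set for the perturbed projection $p+U\circ p^*$, hence Meyer.  In your language: the realizable $\Delta n$ on which $\Phi$ is nonzero form a \emph{finite} set (the Pisot property bounds the spread of population vectors), so $\Phi(\Delta n)$ cannot accumulate at $0$ and your ``full-rank sublattice'' claim fails.  (For the five natural Penrose edge vectors one even has $\mathrm{rank}\,R=1$, and your constraint $\sum u_i=0$ forces $\Phi|_R\equiv 0$, so $\Lambda'$ is literally an affine image of $\Lambda$.)

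The paper resolves this by choosing a \emph{non}-Pisot one-dimensional substitution on three letters whose matrix has eigenvalues $\lambda_1>\lambda_2>1>\lambda_3>0$.  The equal-length tiling is Meyer (vertices in $\Z$); deforming along the $\lambda_3$-eigenvector is asymptotically negligible, so gives a genuine shape conjugacy; and the intermediate eigenvalue $\lambda_2>1$ forces the number of population vectors of words of length $n$ to grow like $n^{\ln\lambda_2/\ln\lambda_1}$, so by pigeonhole the difference set of the deformed tiling cannot be uniformly discrete.  The essential missing ingredient in your plan is precisely this second expanding eigenvalue.
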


\begin{proof} Consider the irreducible substitution $\sigma$ on three
letters, with $\sigma(a) = abca$, 
$\sigma(b) = abb$, and $\sigma(c)= ac$.  Since each substituted letter begins
with $a$, the first cohomology of the resulting tiling space is the direct 
limit of the transpose of the 
subsitution matrix $M=\begin{pmatrix}2&1&1 \cr 1&2&0 \cr 1&0&1
\end{pmatrix}$ \cite{BD}. 
The eigenvalues
of this matrix are $\lambda_1 \approx 3.247$, $\lambda_2 \approx 1.555$, 
and $\lambda_3 \approx .1981$.  Call the eigenvectors $\xi_1$, $\xi_2$
and $\xi_3$. 
Since the third eigenvalue is less than 1 in magnitude, the corresponding
eigenvector is represented by a weakly exact cochain that describes a shape 
conjugacy \cite{CS2}.
The first and second eigenvalues
are greater than 1, and no combination of the two can be represented by 
a weakly exact cochain. 

Pick a bi-infinite sequence coming from the substitution, and consider two
tilings corresponding to that sequence. In one, all of the tiles have 
length 1. In the other, the tile lengths are $\begin{pmatrix}1 \cr 1 \cr 1
\end{pmatrix}$
plus a small multiple of the third eigenvector, chosen so that
the resulting tile lengths are not rationally related. 
The left endpoints of the
$a$ tiles give us point sets $\Lambda$ and $\Lambda'$. 

Since $\Lambda$ is a subset of the integer lattice, it is Meyer. 
$\Lambda'$ is topologically conjugate to $\Lambda$. We will show that
$\Lambda'$ is not Meyer. 

For each finite word $w$ in our space of sequences, let $\ell(w)$ be
the ``population vector'' in $\Z^3$ that counts the number of $a$'s, 
$b$'s and $c$'s in $w$. Consider the possible population vectors of words
of length $n$. The deviation from the average population for a word of length
$n$ is governed by eigenvalues $\lambda_2$ and $\lambda_3$.  Since $|\lambda_3|<1$, 
there is an upper bound to the inner product of $\ell(w)$ with
$\xi_3$. However, since $\lambda_2>1$, the range of possible inner products with $\xi_2$ increases 
with $n$, going as $n^{\ln(\lambda_2)/\ln(\lambda_1)}$. 
Thus the number of possible population vectors for a given length 
$n$ is bounded above and below by constants times $n^{\ln(\lambda_2)/\ln(\lambda_1)}$. 
\footnote{If you apply the substitution $k$ times, the length of a word grows as $\lambda_1^k$, while
the deviation of the population from the average population grows as $\lambda_2^k$.
For a rigorous proof of the $n^{\ln(\lambda_2)/\ln(\lambda_1)}$ law for substitutions on two letters,
see \cite{KSS}. The same proof works for any substitution where there are exactly two eigenvalues bigger
than 1.}
Since the lengths of
the different tiles are irrationally related, each population vector gives
a different spacing (in the tiling) between letters that are $n$ apart (in the
sequence). This means that the number of spacings between letters that are 
at most $n$ apart grows faster than $n$. By the pigeonhole principle, the 
set of possible spacings of letters cannot be uniformly discrete, so the
set of all tile vertices is not Meyer. 

The same argument applies to substituted letters, implying that the left
endpoints of the 1-supertiles are not a Meyer set. However, every 1-supertile
begins with an ``a'' tile, so the left endpoints of the 1-supertiles
are a subset of $\Lambda'$. Since a relatively dense subset of a Meyer set is Meyer,
$\Lambda'$ cannot be Meyer. 
\end{proof}

\end{document}